\def \red {\textcolor{red} }
\numberwithin{equation}{section}
\theoremstyle{plain}
\newtheorem{exam}{Example}[section]
\newtheorem{theorem}[exam]{Theorem}
\newtheorem{lemma}[exam]{Lemma}
\newtheorem{remark}[exam]{Remark}
\newtheorem{definition}[exam]{Definition}
\newtheorem{corollary}[exam]{Corollary}
\newtheorem{example}{Example}[section]
\begin{document}

\title{The eigenvector-eigenvalue identity for the quaternion matrix with its algorithm and computer program
	\footnote{ This paper was jointly supported from the National Natural
		Science Foundation of China under Grant (No.11671176, 11931016).}
}
\author
{
	Yuchao He\,\,\,\,\,Mengda Wu\,\,\,\,
	Yonghui Xia$\footnote{Corresponding author. Yonghui Xia, yhxia@zjnu.cn;xiadoc@163.com.}$
	\\
	{\small \textit{$^a$ College of Mathematics and Computer Science,  Zhejiang Normal University, 321004, Jinhua, China}}\\
	{\small Email:  YuchaoHe@zjnu.edu.cn; medawu@zjnu.edu.cn;  yhxia@zjnu.cn;xiadoc@163.com.}
}

\maketitle

\begin{abstract}
	Peter Denton, Stephen Parke, Terence Tao and Xining Zhang \cite{D-P-T-Z} presented a basic and important identity in linear commutative algebra, so-called {\bf the eigenvector-eigenvalue identity} (formally named in \cite{tao-eig}), which is a convenient and powerful tool to succinctly determine eigenvectors from eigenvalues. The identity relates the eigenvector component to the eigenvalues
	of $A$ and the minor $M_j$, which is formulated in an elegant form as follows
	\[
	\lvert v_{i,j} \rvert^2\prod _{k=1;k\ne i}^{n-1}({\lambda_i}(A)-{\lambda_k}(A))=\prod _{k=1}^{n-1}({\lambda_i}(A)-{\lambda_k}(M_j)). \,\,\,
	\]
	In this paper, we extend the eigenvector-eigenvalue identity to the quaternion division ring, which is non-commutative. A version of eigenvector-eigenvalue identity for the quaternion matrix is established. Furthermore, we give a new method and algorithm to compute the eigenvectors from the right eigenvalues for the  quaternion Hermitian matrix. A program is designed to realize the algorithm to compute the eigenvectors. An open problem ends the paper. Some examples show a good performance of the algorithm and the program. \\
	{\bf Keywords}: eigenvector-eigenvalue identity; quaternion; right eigenvalue; eigenvector; quaternion Hermitian matrix\\
	
	{\bf MSC2020} 65F15;15A18; 16K20; 15B57

\end{abstract}
\section{Introduction}
\subsection{Eigenvector-eigenvalue identity}

Recently, Peter Denton, Stephen Parke, Terence Tao and Xining Zhang \cite{D-P-T-Z} presented {\bf the eigenvector-eigenvalue identity} which is very convenient for us to determine eigenvectors from eigenvalues. The identity is succinctly formulated as
\begin{equation}\label{EEI}
	\lvert v_{i,j} \rvert^2\prod _{k=1;k\ne i}^{n-1}({\lambda_i}(A)-{\lambda_k}(A))=\prod _{k=1}^{n-1}({\lambda_i}(A)-{\lambda_k}(M_j)),
\end{equation}
where $\lambda_1,\lambda_2,\cdots,\lambda_n$ are the $n$ real eigenvalues of the $n\times n$ Hermitian matrix $A$, $v_1,v_2,\cdots,v_n$ are their corresponding standard orthogonal eigenvectors; $v_{ij}$ denotes the $j$-th element of $v_i$;  $M$ is a $(n-1)\times (n-1)$ submatrix by removing $j$-th row and $j$-th column of $A$.
The identity tells us that the $j$-th component $v_{i,j}$ of a unit eigenvector $v_i$
associated to the eigenvalue $\lambda_i(A)$ is related to the eigenvalues $\lambda_1(M_j)\cdots \lambda_n(M_j)$ of the minor $M_j$ of $A$ formed by removing the $j$-th row and column.

The elegant identity 
establishes the direct relationship between the eigenvector component with the eigenvalues
of $A$ and $M_j$.
In fact,
the identity has not had a standard (unified) name or a standard form before it was formally named by \cite{tao-eig}.
Different versions of such identities have been developed in different names and different forms, {and it has been widely applied in graph theory, inverse eigenvalue problem, random matrix theory, numerical linear algebra,and even made great contributions to neutrino physics (see \cite{toshey}).} Because it has no standard name or a standard form, it is difficult to search for occurrences of the identity until the most recent rediscovery \cite{D-P-T-Z,D-P-X}. To avoid the independent rediscoveries of the identity, 
\cite{tao-eig} conducted a nice and systematic paper on  all the appearances of the identity. And in their paper, they formally named the identity as {\bf the eigenvector-eigenvalue identity} and provided several elegant proofs based on the different methods including the adjugate proof, the Cramer’s rule proof, the Coordinate-free proof, the proof using perturbative analysis, the proof using a Cauchy-Binet type formula and the proof using an alternate expression for eigenvector component magnitudes. Also they provided the generalizations of the eigenvector-eigenvalue identity,  and discussed
applications of the identity.

A very complete and detailed history of the eigenvector-eigenvalue identity \eqref{EEI} was provided in the paper \cite{tao-eig}. The authors have spent a lot of time reading and reviewing all the existing works related to the identity \eqref{EEI}. They  provided a citation graph (see Figure 1 in \cite{tao-eig}) to reveal the citation relations between the existing papers related to the identity. We will not repeat the details here. But with respect, we should also thank again to the mathematicians who had contributed to the identity (e.g. \cite{tao-eig,tao,D-P-X,D-P-T-Z,peter,jacobi,LOWNER,G-E,demmel,weinberger,thompson,THOMPSON3,thompson2,D-H,silov,PAIGE,PARLETT,g-l,Golub,Gladwell,boley,G-K-V,knyazey,k-s,XU,chu,D-N-T,M-D,l-f,HAGOS,cvetkovi,Godsil1,G-M,Godsil2,GKMG,N-T-U,bebiano,Yu. Baryshnikov,Forrester,Dumitriu,Mieghem,kAUSEL,wolchover,esy,cramer,birkhoff,G-S,x.chen,stawiska,Mieghem1}).

\subsection{Motivation}

It is known that the eigenvector-eigenvalue identity \eqref{EEI} is valid for the commutative algebra.
However, is the eigenvector-eigenvalue identity \eqref{EEI} valid for the noncommutative algebra? We pay particular attention to the quaternion which is a special noncommutative algebra. Quaternion and the quaternion matrix have attracted many scholars' attentions due to its important applications in the description of protein structure, neural networks, quantum mechanics, fluid mechanics, {Frenet
	frame in differential geometry, kinematic modeling, attitude dynamics,
	Kalman filter design, spatial rigid body transformation, and so on \cite{Xia-SPAM,XIA-JMP,Xia-Book,WangC2,WangJR1,WangJR2,WangJR3,AH,CZRP,F-Y,Baker,compute}.}  It is known that quaternion is a generalization of the complex number, but quaternion is a division ring. Due to its non-commutativity, a lot of properties are completely different from the complex. In particular, we should deal with the eigenvalues by distinguishing the right eigenvalues with the left eigenvalues. Further more, it is known that $n$-degree polynomial in the complex field has no more than $n$ roots. But this is not valid for quaternion ring. Eigenvalues of complex matrices
satisfy Brauer's theorem \cite{Br} for the inclusion of the eigenvalues, whereas right eigenvalues of quaternion matrices do not have this property. The number of eigenvalues may not be limited (e.g. see Farid et al \cite{ZhangFZ},	Li et al \cite{compute}, Zhang and Wei \cite{F-Y},  Chen \cite{chen1}, Baker \cite{Baker}, Brenner \cite{Bren}). Therefore, it is unknown that the eigenvector-eigenvalue identity \eqref{EEI} is  still valid for the quaternion ring.

In the present paper, we extend eigenvector-eigenvalue identity \eqref{EEI} to the quaternion division ring. A version of eigenvector-eigenvalue identity for the quaternion matrix is established. Furthermore, due to the non-commutativity of quaternions, the standard adjoint matrix method will lead us to nowhere. By introducing the definition of quaternion adjoint matrix, we give a new method and algorithm to compute the eigenvectors from the right eigenvalues for the  quaternion Hermitian matrix. A program is designed to realize the algorithm to compute the eigenvectors.
Some examples show a good performance of the algorithm and the program.

\subsection{Outline of the paper}

Section 2 is to present the conceptions and notations. We state and prove our main results in Section 3. {Due to the non-commutativity of quaternions, the standard adjoint matrix method will lead us to nowhere. In order to overcome this difficulty, we should introduce the concept of the quaternion adjoint matrix to complete the proof.} Section 4 is
to give the algorithms and program  to compute the right eigenvalues of quaternion Hermitian matrix and the elements of the corresponding eigenvectors.

\section{Preliminaries}

In this section, we briefly introduce some basic properties of quaternion.  We recall some notations and preliminary results from \cite{Xia-Book,chen1}.
We need the following notations. $\mathbb{R}$ and $\mathbb{Q}$ denote the real field and the quaternion division ring respectively. $\mathbb{F}^{m \times n}$ denotes the
set of all $m \times n$ matrices on $\mathbb{F}$. For any $A \in \mathbb{Q}^{n \times n}$, $A^*$ refer to the conjugate transpose
of $A$.
A quaternion $a \in \mathbb{Q}$ is defined by
$$a = a_0 + a_1\mathbf{i} + a_2\mathbf{j} + a_3\mathbf{k},$$
where $a_0, a_1, a_2, a_3\in \mathbb{R}$. The imaginary units $\mathbf{i},\mathbf{j},\mathbf{k}$ satisfy\\
$$\mathbf{i}^2=\mathbf{j}^2=\mathbf{k}^2=\mathbf{ijk}=-1,$$
which implies $\mathbf{jk}=-\mathbf{kj}=\mathbf{i}, \mathbf{ki}=-\mathbf{ik}=\mathbf{j}, \mathbf{ij}=-\mathbf{ji}=\mathbf{k}.$ The conjugate of $a$ $\in \mathbb{Q}$ is given by $\bar{a}= a_0-a_1\mathbf{i}-a_2\mathbf{j}-a_3\mathbf{k}$ and the
module $|a|$ is defined by
$$|a|=a_0^2+a_1^2+a_2^2+a_3^2=\bar{a}a=a\bar{a}.$$
The determinant is defined based on permutation  (see \cite{Xia-Book,XIA-JMP,chen1}).
\begin{definition}\label{det}
	For any $A \in \mathbb{Q}^{n \times n},$
	$$\det A=\det \begin{pmatrix}a_{11} &a_{12}&\cdots &a_{1n}\\a_{21} &a_{22}&\cdots &a_{2n}\\
		\vdots&\vdots&\ddots\ & \vdots\\
		a_{n1} &a_{n2}&\cdots &a_{nn} \end{pmatrix}$$
	:=$\Sigma_{\sigma\in S_n}\epsilon(\sigma)a_{n_1i_2}a_{i_2i_3}\cdots
	a_{i_{s-1}i_s}a_{i_sn_1}a_{n_2j_2}\cdots a_{j_tn_2}\cdots a_{n_rk_2}
	\cdots a_{k_ln_r},$
	where $S_n$ is the symmetric group on $n$ letters,and the disjoint cycle decomposition of $\sigma \in S_n$ is written in the normal form:
	\begin{equation*}
		\begin{array}{l}
			\sigma=(n_1i_2i_3\cdots i_s)
			(n_2j_2j_3\cdots j_t)\cdots
			(n_rk_2k_3\cdots k_l),\\
			n_1>i_2,i_3,\cdots,i_s,
			n_2>j_2,j_3,\cdots,j_t,\cdots,
			n_r>k_2,k_3,\cdots,k_l,\\
			n=n_1>n_2>\cdots>n_r\geq1,
		\end{array}
	\end{equation*}
	and
	$$\epsilon(\sigma)=(-1)^{(s-1)+(t-1)+\cdots+(l-1)}=(-1)^{n-r}.$$
\end{definition}
Under the above definition, the quaternion Hermitian matrix $A$ (i.e. $A=A^*\in \mathbb{Q}^{n\times n}$) has following properties \cite{row,chen1} (In Lemma \ref{diag1}-\ref{change-det}, we always assume that $A$ is a quaternion Hermitian matrix).
\begin{lemma}\label{diag1}
	The matrix $\lambda E-A$ can be transformed into a centralized matrix in the form of $\mathrm{diag}(1,1,\cdots,1,\phi_1(\lambda),\phi_2(\lambda),\cdots,\phi_s(\lambda))$ by elementary transformation,
	where $\phi_i(\lambda)|\phi_{i+1}(\lambda),$ $ i=1,2,\cdots,s-1.$ Moreover, we have $$ \det(A)=(-1)^n\phi_1(0)\phi_2(0)\cdots\phi_n(0).$$
\end{lemma}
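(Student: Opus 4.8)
The plan is to establish the diagonal (Smith-type) normal form for $\lambda E - A$ over a suitable ring and then read off the determinant formula. Because $A$ is Hermitian, its right eigenvalues are real, so the natural setting is to work with the $\mathbb{R}[\lambda]$-module structure arising from the entries of $\lambda E - A$, or equivalently to pass through the complex adjoint (companion) matrix $\chi_A \in \mathbb{C}^{2n\times 2n}$ of $A$, which is genuinely Hermitian over $\mathbb{C}$. First I would recall that elementary row/column operations over $\mathbb{Q}[\lambda]$ (left-multiplication and right-multiplication by elementary quaternion matrices, together with permutations) do not change the determinant up to a unit, and that such operations suffice to bring $\lambda E - A$ to a diagonal form $\mathrm{diag}(1,\dots,1,\phi_1(\lambda),\dots,\phi_s(\lambda))$ with the divisibility chain $\phi_i \mid \phi_{i+1}$; this is the analogue of the Smith normal form and is exactly what references \cite{row,chen1} supply for Hermitian quaternion matrices, where the invariant factors $\phi_i(\lambda)$ turn out to be real polynomials in $\lambda$.

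Next I would track the effect of these elementary transformations on the determinant. Each elementary operation of the ``add a multiple of one row/column to another'' type leaves $\det$ unchanged by the permutation-based Definition \ref{det}; each row/column swap contributes a sign; and rescaling is not needed if we are careful, or contributes a unit of modulus one. Since the transformation is applied to the polynomial matrix $\lambda E - A$ and then specialized, consistency of the sign bookkeeping forces the relation $\det(\lambda E - A) = (-1)^{?}\,\phi_1(\lambda)\cdots\phi_s(\lambda)$ with the $1$'s contributing nothing; comparing leading coefficients in $\lambda$ (the left side is monic of degree $n$ in $\lambda$ after the determinant is suitably interpreted, via the complex adjoint if necessary) pins down that there are exactly $n$ invariant factors when counted with the leading ones and fixes the global sign. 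Setting $\lambda = 0$ then yields $\det(-A) = \phi_1(0)\cdots\phi_n(0)$, and pulling the scalar $-1$ through an $n\times n$ determinant gives $\det(-A) = (-1)^n \det(A)$, hence $\det(A) = (-1)^n \phi_1(0)\cdots\phi_n(0)$.

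The main obstacle is that ``determinant'' here is the Moore/permutation-style quaternion determinant of Definition \ref{det}, which is not multiplicative and does not behave naively under left versus right multiplication by elementary matrices; so the step ``elementary transformations change $\det$ only by a unit'' must be justified with care rather than quoted from the commutative theory. I would handle this by restricting attention to the Hermitian case, where Definition \ref{det} agrees with the real-valued Moore determinant and with $\det_{\mathbb{C}}(\chi_A)$ up to a known power (the standard fact that $\det_{\mathbb{C}}(\lambda E_{2n} - \chi_A) = \big(\prod_{k}(\lambda-\lambda_k(A))\big)^2$ for Hermitian $A$), and by invoking Lemma \ref{diag1}'s hypotheses: the elementary transformations used to produce the diagonal form are precisely those under which this real determinant is invariant up to sign, and the divisibility chain plus reality of the $\phi_i$ is exactly what \cite{row,chen1} guarantee. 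Once that compatibility is in place, the determinant identity is immediate from specialization at $\lambda = 0$; so the real content of the lemma is the normal-form reduction, and the determinant formula is a corollary of carefully matching signs and leading terms.
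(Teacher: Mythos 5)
First, a point of comparison: the paper itself offers no proof of Lemma \ref{diag1}; it is quoted as a known property of Hermitian quaternion matrices with a citation to \cite{row,chen1}. So your attempt cannot be measured against an in-paper argument, and it has to stand on its own. As written, it does not: the first (and main) assertion of the lemma --- that $\lambda E-A$ can be brought by elementary transformations to a centralized diagonal form with real invariant factors satisfying the divisibility chain --- is not proved by you at all, but simply attributed to ``exactly what references \cite{row,chen1} supply,'' i.e.\ to the very sources the lemma is citing. That is circular as a proof attempt; if you intend to prove the lemma, the Smith-type reduction over $\mathbb{Q}[\lambda]$ and, crucially, the centrality (reality) of the $\phi_i$ for Hermitian $A$ are the content that must be established, not assumed.

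The determinant formula also has a genuine gap, which you correctly identify as the ``main obstacle'' but then do not close. The determinant of Definition \ref{det} is not multiplicative and is not invariant (even up to sign) under one-sided elementary row or column operations; the only invariance available in this paper is Lemma \ref{change-det}, which covers Hermitian congruences $P^{*}HP$, whereas the intermediate matrices in a Smith-type reduction are generally not Hermitian. Your proposed repair via the complex adjoint $\chi_A$ is a reasonable instinct, but it is only gestured at: to make it work you would need to (a) actually prove the normal form with monic real $\phi_i$, (b) show that the invariant factors of $\lambda E_{2n}-\chi_A$ over $\mathbb{C}[\lambda]$ are the $\phi_i$ each repeated twice, so that $\bigl(\prod_i\phi_i(\lambda)\bigr)^2=\prod_k(\lambda-\lambda_k(A))^2$ and hence, using monicity to fix the sign, $\prod_i\phi_i(\lambda)=\prod_k(\lambda-\lambda_k(A))$, and (c) evaluate at $\lambda=0$ and invoke Lemma \ref{det-eig} ($\det A=\prod_k\lambda_k(A)$) to obtain $\prod_i\phi_i(0)=(-1)^n\det(A)$. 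None of (a)--(c) is carried out; in particular your step ``setting $\lambda=0$ yields $\det(-A)=\phi_1(0)\cdots\phi_n(0)$'' presupposes $\det(\lambda E-A)=\prod_i\phi_i(\lambda)$, which is exactly the invariance statement you flagged as unjustified, and ``comparing leading coefficients fixes the global sign'' is only meaningful once the $\phi_i$ are normalized to be monic and the equality of polynomial products is in hand. (The bookkeeping reconciling the $s$ nontrivial invariant factors with the $n$ factors in the displayed formula, by counting the leading $1$'s as trivial $\phi_i$'s, is fine.)
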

\begin{lemma}\label{right-eig} $A$ has $n$ right eigenvalues, which are all real numbers.
\end{lemma}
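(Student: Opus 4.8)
The statement splits naturally into two parts: that every right eigenvalue of a quaternion Hermitian matrix is real, and that there are exactly $n$ of them. The first part is the easy one, and I would dispatch it first. Suppose $Av=v\lambda$ with $v\in\mathbb{Q}^{n}\setminus\{0\}$ and $\lambda\in\mathbb{Q}$. Multiplying on the left by $v^{*}$ gives $v^{*}Av=(v^{*}v)\lambda=|v|^{2}\lambda$, where $|v|^{2}>0$ is a strictly positive real number. On the other hand $(v^{*}Av)^{*}=v^{*}A^{*}v=v^{*}Av$ because $A=A^{*}$, so the quaternion $v^{*}Av$ equals its own conjugate and is therefore real; dividing by the positive real $|v|^{2}$ forces $\lambda\in\mathbb{R}$. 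Note also that, being real, such a $\lambda$ is central in $\mathbb{Q}$, so $Av=v\lambda=\lambda v$; this observation will be used in the deflation below.

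For the counting part the plan is to diagonalize $A$ by a quaternionic unitary matrix and induct on $n$, the case $n=1$ being trivial. In the inductive step I would first produce one real right eigenvalue $\lambda_{1}$ with a unit eigenvector $u_{1}$ (so $u_{1}^{*}u_{1}=1$). One route is to pass to the $2n\times 2n$ complex adjoint matrix $\chi(A)$, which is complex Hermitian with eigenvalues occurring in equal pairs; picking one of its real eigenvalues $\lambda_{1}$ and a complex eigenvector, and using that $\chi$ is a ring homomorphism compatible with a fixed symplectic structure, one folds the eigenvector back to a genuine quaternionic vector $u$ with $Au=u\lambda_{1}$, then normalizes. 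Alternatively this can be read off from Lemma \ref{diag1}: choosing $\lambda_{1}$ to annihilate the last invariant factor $\phi_{s}$ makes $\lambda_{1}E-A$ singular over $\mathbb{Q}$, hence it possesses a nonzero right null vector. Having $\lambda_{1}$ and $u_{1}$, extend $u_{1}$ to a quaternionic unitary matrix $U=(u_{1}\mid W)$ by Gram--Schmidt over the normed division ring $\mathbb{Q}$. Then $U^{*}AU$ is Hermitian, and since $u_{1}^{*}A=(Au_{1})^{*}=(\lambda_{1}u_{1})^{*}=\lambda_{1}u_{1}^{*}$ one gets $u_{1}^{*}Au_{1}=\lambda_{1}$, $u_{1}^{*}AW=\lambda_{1}u_{1}^{*}W=0$ and $W^{*}Au_{1}=\lambda_{1}W^{*}u_{1}=0$, so that
\[
U^{*}AU=\begin{pmatrix}\lambda_{1}&0\\0&A'\end{pmatrix},\qquad A'=(A')^{*}\in\mathbb{Q}^{(n-1)\times(n-1)}.
\]
Applying the inductive hypothesis to $A'$ produces $n-1$ real right eigenvalues with an orthonormal system of eigenvectors; prepending $\lambda_{1}$ and pulling the eigenvectors back through $U$ (alongside $u_{1}$) yields $n$ real right eigenvalues of $A$, counted with multiplicity, with pairwise orthogonal unit eigenvectors --- equivalently, the spectral decomposition $A=U\,\mathrm{diag}(\lambda_{1},\dots,\lambda_{n})\,U^{*}$. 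Since, by the first paragraph, $A$ can have no right eigenvalue other than a real one, this list is exhaustive.

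The step I expect to be the main obstacle is the production of the first real eigenpair $(\lambda_{1},u_{1})$: over the noncommutative ring $\mathbb{Q}$ the naive ``the characteristic polynomial has a root'' argument needs care, and one must genuinely verify either that real eigenvectors of $\chi(A)$ descend to honest quaternionic eigenvectors rather than spurious ones, or that the invariant-factor form of Lemma \ref{diag1} really detects singularity of $\lambda_{1}E-A$. The remaining ingredients --- quaternionic Gram--Schmidt, the vanishing of the off-diagonal blocks, and the induction --- are routine once one keeps track of which scalars are central, and the reality of each $\lambda_{i}$ then follows at once from the computation in the first paragraph applied to the corresponding eigenpair.
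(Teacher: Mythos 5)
Your proposal is essentially correct, but note that the paper does not actually prove Lemma \ref{right-eig}: it is quoted from the references \cite{row,chen1}, where it comes out of the quaternionic determinant and invariant-factor machinery behind Definition \ref{det} and Lemma \ref{diag1}. What you give instead is the standard self-contained spectral-theorem argument, and it is sound where it is explicit: the computation $v^{*}Av=(v^{*}v)\lambda$ with $v^{*}Av$ self-conjugate does force every right eigenvalue to be real (hence central), and the deflation $U^{*}AU=\mathrm{diag}(\lambda_{1},A')$ with quaternionic Gram--Schmidt and induction is correct, the off-diagonal blocks vanishing exactly as you compute. The step you must not leave as a sketch is the existence of the first eigenpair. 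The complex-adjoint route is the right one and can be completed: writing $A=A_{1}+A_{2}\mathbf{j}$ with $A_{1},A_{2}$ complex, $\chi(A)=\begin{pmatrix}A_{1}&A_{2}\\-\overline{A_{2}}&\overline{A_{1}}\end{pmatrix}$ is complex Hermitian, and an eigenpair $(\lambda,(w_{1};w_{2}))$ with $\lambda$ real folds back (up to sign conventions) to the nonzero quaternionic vector $v=w_{1}-\overline{w_{2}}\mathbf{j}$ with $Av=v\lambda$; this verification is routine but should be written out. By contrast, your alternative route via Lemma \ref{diag1} is not yet a proof: you would need to show that the last invariant factor $\phi_{s}$ has a real root and that evaluating at that central value really detects singularity of $\lambda_{1}E-A$, which is exactly the nontrivial content of the cited determinant theory and cannot be invoked for free. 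Finally, exhaustiveness deserves one explicit line: if $Av=v\mu$ with $\mu$ real, then $U^{*}v$ is a nonzero null vector of $\mathrm{diag}(\lambda_{1},\dots,\lambda_{n})-\mu E$, so $\mu$ equals some $\lambda_{i}$. With those two points filled in, your proof stands, and it has the merit of being independent of the row-expansion and determinant formalism the paper relies on elsewhere.
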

\begin{lemma}\label{right-diag}
	There exists a unitary matrix $V \in \mathbb{Q}^{n \times n}$, such that
	$$V^*AV=\mathrm{diag} \{\lambda_1,\lambda_2,\cdots,\lambda_n\},$$
	where $\lambda_1,\lambda_2,\cdots,\lambda_n$ are the right eigenvalues of $A$.\\
\end{lemma}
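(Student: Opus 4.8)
The statement to prove is Lemma~\ref{right-diag}: every quaternion Hermitian matrix $A$ is unitarily diagonalizable with real diagonal entries equal to its right eigenvalues.

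My plan is to proceed by induction on $n$, mimicking the classical spectral theorem argument. The base case $n=1$ is trivial since $A = (a_{11})$ with $a_{11}$ real (Hermitian forces $\bar a_{11} = a_{11}$). For the inductive step, I would first invoke Lemma~\ref{right-eig}, which guarantees that $A$ has a real right eigenvalue $\lambda_1$; pick a corresponding right eigenvector $x_1 \in \mathbb{Q}^n$ with $Ax_1 = x_1\lambda_1$, normalized so that $x_1^* x_1 = 1$. The first obstacle is that I must be careful with the side on which scalars act: over $\mathbb{Q}$ the natural module structure making eigenvalue theory work is the \emph{right} module $\mathbb{Q}^n$, and "unitary" should mean $V^*V = E$ in this right-module sense. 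I would extend $x_1$ to a unitary matrix $V_1 = (x_1, x_2, \dots, x_n) \in \mathbb{Q}^{n\times n}$ by a Gram--Schmidt process valid over the division ring $\mathbb{Q}$ (here one uses that $x^*x > 0$ for $x \neq 0$ and that $|q|$ is multiplicative, so norms behave well).

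The key computation is then to examine $B := V_1^* A V_1$. Its first column is $V_1^* A x_1 = V_1^* x_1 \lambda_1 = e_1 \lambda_1$, so the first column is $(\lambda_1, 0, \dots, 0)^T$. Since $A$ is Hermitian and $V_1$ is unitary, $B$ is also Hermitian: $B^* = V_1^* A^* V_1 = V_1^* A V_1 = B$. Hence the first \emph{row} of $B$ is also $(\lambda_1, 0, \dots, 0)$, so $B = \mathrm{diag}(\lambda_1, A_1)$ where $A_1 \in \mathbb{Q}^{(n-1)\times(n-1)}$ is again Hermitian. By the induction hypothesis there is a unitary $W \in \mathbb{Q}^{(n-1)\times(n-1)}$ with $W^* A_1 W = \mathrm{diag}(\lambda_2, \dots, \lambda_n)$, all $\lambda_i$ real. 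Setting $V = V_1 \cdot \mathrm{diag}(1, W)$, which is unitary as a product of unitary matrices, gives $V^* A V = \mathrm{diag}(\lambda_1, \lambda_2, \dots, \lambda_n)$. Finally I would note that since $V^*AV$ is diagonal real, each $\lambda_i$ is a right eigenvalue of $A$ with eigenvector the $i$-th column of $V$, and conversely any right eigenvalue must appear among the $\lambda_i$ (a counting argument: Lemma~\ref{right-eig} says there are exactly $n$ of them, counted appropriately).

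The step I expect to be the main obstacle is establishing that Gram--Schmidt orthonormalization genuinely works over $\mathbb{Q}$ to extend a unit vector to a unitary matrix, and keeping the left/right scalar conventions consistent throughout (in particular, verifying $V_1^* A x_1 = e_1\lambda_1$ requires $A x_1 = x_1 \lambda_1$ with the scalar on the \emph{right}, and that $V_1^* x_1 = e_1$ from the orthonormality columns). An alternative, cleaner route that sidesteps some of this is to use the complex adjoint (companion) matrix $\chi_A \in \mathbb{C}^{2n\times 2n}$ of $A$: when $A$ is quaternion Hermitian, $\chi_A$ is complex Hermitian, hence diagonalizable over $\mathbb{C}$ with real eigenvalues occurring in pairs, and one can then descend the complex eigenvectors back to quaternion eigenvectors and repackage the complex unitary diagonalizer as a quaternion unitary matrix. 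I would likely present the direct inductive proof as the main argument since it is self-contained given Lemmas~\ref{right-eig} and the Hermitian structure, but mention the adjoint-matrix approach as the conceptual backbone, especially since the quaternion adjoint matrix is introduced later in the paper for the eigenvector-eigenvalue identity itself.
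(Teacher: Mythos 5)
The paper itself offers no proof of Lemma~\ref{right-diag}: it is listed among the preliminary facts quoted from the literature (\cite{row,chen1}), so the only fair comparison is with the standard spectral theorem for quaternion Hermitian matrices that those references establish --- and your inductive deflation argument is exactly that standard proof, carried out correctly. The base case, the use of Lemma~\ref{right-eig} to produce a real right eigenvalue $\lambda_1$ with unit eigenvector $x_1$, the quaternionic Gram--Schmidt extension to a unitary $V_1$, the computation $V_1^*AV_1 = \mathrm{diag}(\lambda_1, A_1)$ (using that the first column is $e_1\lambda_1$ and that Hermitianness kills the first row), and the recursion via $V = V_1\,\mathrm{diag}(1,W)$ are all sound, and your care about right-module conventions is exactly the right thing to watch. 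The one place you are loose is the final converse claim that every right eigenvalue of $A$ occurs among the $\lambda_i$ ``by counting'': multiplicity counting of right eigenvalues over $\mathbb{Q}$ is delicate, and a cleaner argument avoids it entirely --- if $Av = v\mu$ with $v\neq 0$ and $\mu$ real (real by Lemma~\ref{right-eig}), set $w = V^*v \neq 0$; then $Dw = w\mu$ with $D = \mathrm{diag}(\lambda_1,\dots,\lambda_n)$ real, so $\lambda_i w_i = w_i\mu$ for every $i$, and any nonzero component $w_i$ forces $\mu = \lambda_i$. With that substitution your proof is complete and self-contained, which is arguably more than the paper provides; the adjoint-matrix route you sketch as an alternative is also viable but unnecessary here.
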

\begin{lemma}\label{det-eig}If $\lambda_1(A),\lambda_2(A),\cdots,\lambda_n(A)$ are the right eigenvalues of $A$, then $$\det(A)=\prod _{i=1}^{n}{\lambda_i}(A).$$
\end{lemma}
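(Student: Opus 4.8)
The plan is to derive the identity by combining the Smith-type reduction of $\lambda E-A$ from Lemma~\ref{diag1} with the real diagonalisation from Lemma~\ref{right-diag}. Write the reduced form as $\mathrm{diag}(1,\dots,1,\phi_1(\lambda),\dots,\phi_s(\lambda))$ and set $p(\lambda):=\phi_1(\lambda)\cdots\phi_n(\lambda)$, where the factors equal to $1$ are also labelled among the $\phi_i$, so that Lemma~\ref{diag1} records $\det(A)=(-1)^n\phi_1(0)\cdots\phi_n(0)=(-1)^np(0)$. Consequently it suffices to prove the polynomial identity $p(\lambda)=\prod_{i=1}^n(\lambda-\lambda_i(A))$: evaluating it at $\lambda=0$ gives $p(0)=(-1)^n\prod_{i=1}^n\lambda_i(A)$, and the two factors $(-1)^n$ cancel to leave $\det(A)=\prod_{i=1}^n\lambda_i(A)$.

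To prove $p(\lambda)=\prod_{i=1}^n(\lambda-\lambda_i(A))$ I would proceed in three steps. First, since $A=A^*$ is Hermitian, the elementary transformations of Lemma~\ref{diag1} can be carried out so that every invariant factor $\phi_i(\lambda)$ lies in $\mathbb{R}[\lambda]$; hence $p(\lambda)$ is a genuine monic real polynomial, and its degree is $n$ because the product of the invariant factors is associate to $\det(\lambda E-A)$, which is monic of degree $n$ by Definition~\ref{det} (only the identity permutation contributes the top-degree term $\prod_i(\lambda-a_{ii})$). Second, by Lemma~\ref{right-diag} there is a unitary $V$ with $A=VDV^*$, $D=\mathrm{diag}(\lambda_1,\dots,\lambda_n)$ and $\lambda_i\in\mathbb{R}$ (real by Lemma~\ref{right-eig}); then $\lambda E-A=V(\lambda E-D)V^*$, and applying the reduction of Lemma~\ref{diag1} shows that $\lambda E-A$ and $\lambda E-D$ share the same system of invariant factors, so $p(\lambda)$ also equals the corresponding product for $\lambda E-D$. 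Third, for the diagonal matrix $D$ that product is transparently $\prod_{i=1}^n(\lambda-\lambda_i)$ — directly, or via Lemma~\ref{diag1} applied to $D$ together with the fact that $\det(D-\lambda E)$ collapses to $\prod_i(\lambda_i-\lambda)$ by Definition~\ref{det}. Chaining these gives $p(\lambda)=\prod_{i=1}^n(\lambda-\lambda_i(A))$, and the identity follows as in the first paragraph.

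The step I expect to be the main obstacle is the congruence-invariance used in the second step: verifying carefully that conjugating by a unitary quaternion matrix leaves the invariant factors $\phi_i(\lambda)$ of the elementary reduction unchanged. In the commutative case this is the uniqueness of the Smith normal form over a PID, but over $\mathbb{Q}[\lambda]$ one must be attentive to non-commutativity; the saving grace is that we only handle the Hermitian pencil $\lambda E-A$ and the \emph{real} polynomials $\phi_i(\lambda)$, so the relevant determinantal data live over the commutative ring $\mathbb{R}[\lambda]$ and the classical uniqueness argument can be adapted. An alternative route, which sidesteps invariant factors, is to show directly that the permutation determinant of Definition~\ref{det} is invariant under $A\mapsto V^*AV$ for unitary $V$ and Hermitian $A$, and that $\det\big(\mathrm{diag}(\lambda_1,\dots,\lambda_n)\big)=\prod_i\lambda_i$; the latter is immediate from Definition~\ref{det}, while the former can again be reduced to Lemma~\ref{diag1}. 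In either presentation, this invariance statement is the crux of the proof.
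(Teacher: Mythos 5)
The paper never proves this lemma: Lemmas \ref{diag1}--\ref{change-det} are imported as known facts about the permutation determinant of Definition \ref{det}, with Lemma \ref{det-eig} quoted from \cite{row,chen1}. So there is no in-paper argument to measure you against; what you have written is a reconstruction of a cited result, and it should be judged on its own completeness. Your skeleton (reduce $\lambda E-A$ to the pencil $\lambda E-D$ of the real diagonal form supplied by Lemmas \ref{right-eig} and \ref{right-diag}, identify the product of the $\phi_i$ with $\prod_i(\lambda-\lambda_i)$, then evaluate at $\lambda=0$ through the ``moreover'' formula of Lemma \ref{diag1}) is a sensible route, but as written it has two gaps. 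First, the step you yourself flag as the crux --- that $\lambda E-A$ and $\lambda E-D$ have the \emph{same} invariant factors --- is exactly the nontrivial noncommutative content, and ``the classical uniqueness argument can be adapted'' is not a proof; uniqueness of a Smith-type form over $\mathbb{Q}[\lambda]$ holds only up to similarity and needs a genuine argument. Second, your degree count rests on the claim that the product of the invariant factors is associate to $\det(\lambda E-A)$ ``which is monic of degree $n$ by Definition \ref{det}''; over the quaternions the permutation determinant of Definition \ref{det} is neither multiplicative nor invariant under general elementary operations (the paper's Lemma \ref{change-det} covers only a very special congruence), so this transfer across the reduction of the symbolic pencil is unjustified as stated.

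Both gaps are avoidable, and you brush against the repair without using it. You do not need uniqueness at all: Lemma \ref{diag1} asserts its determinant formula for a reduction into the centralized form, so it suffices to exhibit \emph{one} such reduction whose $\phi$-product you can compute. Since $V$ is an invertible constant quaternion matrix, it is a product of elementary matrices over the division ring, so $\lambda E-A=V(\lambda E-D)V^{*}$ shows that $\lambda E-A$ can be carried by elementary transformations onto the classical real Smith form of $\lambda E-D$ over $\mathbb{R}[\lambda]$, whose invariant factors are real, satisfy the divisibility condition, and multiply to $\prod_{i=1}^{n}(\lambda-\lambda_i)$; evaluating at $\lambda=0$ and cancelling $(-1)^{n}$ then gives $\det(A)=\prod_{i}\lambda_i(A)$, and the ``associate to $\det(\lambda E-A)$'' claim becomes unnecessary. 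Alternatively, the shortest route consistent with the paper's toolkit is the one you mention only in passing: prove (or cite from \cite{chen1}, in the spirit of Lemma \ref{change-det}) that $\det$ is invariant under unitary congruence of Hermitian matrices, and combine $\det(A)=\det(V^{*}AV)$ with the immediate computation $\det\bigl(\mathrm{diag}(\lambda_1,\dots,\lambda_n)\bigr)=\prod_i\lambda_i$. Until one of these invariance statements is actually established rather than asserted, the proposal is an outline, not a proof.
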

\begin{lemma}\label{change-det}(see \cite{chen1}) Let $P(k,j_\lambda)$ be the matrix obtained by adding $\lambda (\lambda\in \mathbb{Q})$ times the $j$-th column to the $k$-th column of unit $n \times n$ matrix $E$. If $H^*=H\in \mathbb{Q}^{n\times n}$, then
	$$\det P^*(k,j_\lambda)HP(k,j_\lambda)=\det H,\quad
	j\ne k=1, 2, \cdots, n.$$
\end{lemma}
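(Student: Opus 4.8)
The plan is to avoid manipulating the permutation sum of Definition~\ref{det} directly and instead transport the identity into ordinary complex linear algebra. Write $P:=P(k,j_\lambda)=E+\lambda E_{jk}$, where $E_{jk}$ is the quaternion matrix with entry $1$ in position $(j,k)$ and $0$ elsewhere, and set $N:=P^{*}HP$. Since $H^{*}=H$ we have $N^{*}=(P^{*}HP)^{*}=P^{*}H^{*}P=N$, so $N$ is again quaternion Hermitian, and Lemma~\ref{det-eig} yields $\det H=\prod_{i=1}^{n}\lambda_i(H)\in\mathbb{R}$ and $\det N=\prod_{i=1}^{n}\lambda_i(N)\in\mathbb{R}$. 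Hence the lemma is equivalent to the assertion $\prod_{i=1}^{n}\lambda_i(N)=\prod_{i=1}^{n}\lambda_i(H)$. The reason a one-line proof is unavailable is exactly the non-commutativity stressed in the introduction: over $\mathbb{Q}$ one does \emph{not} have $\det(AB)=\det(A)\det(B)$, so one cannot simply write $\det N=|\det P|^{2}\det H$, and the classical adjugate/Cramer manipulation collapses.

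To perform the transport, use the complex adjoint representation $\chi\colon\mathbb{Q}^{n\times n}\to\mathbb{C}^{2n\times 2n}$ defined, for $A=A_1+A_2\mathbf{j}$ with $A_1,A_2\in\mathbb{C}^{n\times n}$, by
$$\chi(A)=\begin{pmatrix} A_1 & A_2\\ -\overline{A_2} & \overline{A_1}\end{pmatrix}.$$
It is classical that $\chi$ is an injective unital ring homomorphism with $\chi(A^{*})=\chi(A)^{*}$, and that when $A=A^{*}$ the matrix $\chi(A)$ is complex Hermitian, its $2n$ real eigenvalues being $\lambda_1(A),\dots,\lambda_n(A)$ each repeated twice. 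Together with Lemma~\ref{det-eig} this gives, for every quaternion Hermitian $A$,
$$\det\nolimits_{\mathbb{C}}\chi(A)=\prod_{i=1}^{n}\lambda_i(A)^{2}=(\det A)^{2},$$
where $\det_{\mathbb{C}}$ is the ordinary complex determinant and $\det$ the one of Definition~\ref{det}. Applying this to $N$ and to $H$ and using multiplicativity of $\det_{\mathbb{C}}$,
$$(\det N)^{2}=\det\nolimits_{\mathbb{C}}\chi(N)=\det\nolimits_{\mathbb{C}}\bigl(\chi(P)^{*}\chi(H)\chi(P)\bigr)=\bigl|\det\nolimits_{\mathbb{C}}\chi(P)\bigr|^{2}\,(\det H)^{2}.$$

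It remains to evaluate $\det_{\mathbb{C}}\chi(P)$ and to settle a sign. A direct check gives $(P-E)^{2}=(\lambda E_{jk})^{2}=0$ (using $j\neq k$), so $P$ is unipotent; applying $\chi$, $\bigl(\chi(P)-\chi(E)\bigr)^{2}=\chi\bigl((P-E)^{2}\bigr)=0$, so $\chi(P)$ is unipotent over $\mathbb{C}$ and $\det_{\mathbb{C}}\chi(P)=1$. Hence $(\det N)^{2}=(\det H)^{2}$, i.e.\ $|\det N|=|\det H|$. For the sign: $\chi(P)$ is invertible, so $\chi(N)=\chi(P)^{*}\chi(H)\chi(P)$ is $*$-congruent to $\chi(H)$ over $\mathbb{C}$, whence by Sylvester's law of inertia the two have the same inertia; since the eigenvalues of $\chi(\,\cdot\,)$ are those of the underlying quaternion Hermitian matrix doubled, $H$ and $N$ have the same number of negative right eigenvalues (and the same rank). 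Therefore $\prod_i\lambda_i(N)$ and $\prod_i\lambda_i(H)$ are either both zero or share the sign $(-1)^{\#\{\,i\,:\,\lambda_i<0\,\}}$, and with $|\det N|=|\det H|$ this gives $\det(P^{*}HP)=\det N=\det H$.

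If one instead wanted a proof purely inside Definition~\ref{det}, the main obstacle would be the following. Factor the congruence as the one-sided column operation $H\mapsto HP$ followed by the row operation $HP\mapsto P^{*}(HP)$; the intermediate matrices are no longer Hermitian, and the cycle products occurring in Definition~\ref{det} are sensitive to the side on which $\lambda$ is inserted, so one has to track, cycle by cycle, the extra terms created by the $\lambda$-correction and show that they cancel in pairs or telescope, exploiting $\overline{\lambda}\lambda=\lambda\overline{\lambda}=|\lambda|^{2}\in\mathbb{R}$ to move scalars past the relevant products. The representation-theoretic route above bypasses this bookkeeping; beyond Lemma~\ref{det-eig}, its only non-elementary ingredient is the standard fact that the complex adjoint of a quaternion Hermitian matrix realizes each right eigenvalue with even multiplicity.
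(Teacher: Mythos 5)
Your argument is correct, but note that the paper itself gives no proof of this lemma at all: it is quoted verbatim from \cite{chen1}, where (as you anticipate in your closing paragraph) the invariance is established by working directly with the permutation-cycle expansion of Definition~\ref{det}. Your route is genuinely different: you transport everything through the complex adjoint representation $\chi$, use that $\chi$ of a quaternion Hermitian matrix is complex Hermitian with each right eigenvalue doubled, deduce $\det_{\mathbb{C}}\chi(A)=(\det A)^2$ via Lemma~\ref{det-eig}, kill the congruence factor because $\chi(P(k,j_\lambda))$ is unipotent (your check that $(P-E)^2=0$ for $j\neq k$ is right), and then fix the sign with Sylvester's law of inertia. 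All steps are sound, and the payoff is a short proof that avoids the cycle-by-cycle bookkeeping entirely; the cost is reliance on Lemma~\ref{det-eig} and on the spectral doubling property of $\chi$, neither of which the elementary proof needs. One caveat worth stating explicitly: within this paper Lemmas~\ref{det-eig} and~\ref{change-det} are both imported as independent known facts, so your derivation is logically admissible here, but in the source development the congruence-invariance of the determinant is among the tools used to build up results such as Lemma~\ref{det-eig}, so your argument should not be read as a replacement for the original proof chain, only as an alternative verification. A small simplification is also available: once you know $(\det N)^2=(\det H)^2$, you can settle the sign without inertia by replacing $\lambda$ with $t\lambda$, $t\in[0,1]$, and noting that $\det\bigl(P(k,j_{t\lambda})^{*}HP(k,j_{t\lambda})\bigr)$ is a continuous (indeed polynomial) function of $t$ taking values in $\{\pm\det H\}$ and equal to $\det H$ at $t=0$.
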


\section{Main results and the proofs}
\subsection{Caculations of the modulus of the eigenvector elements}
In this section, we present the main results of this paper. Firstly we extend Cauchy-Binet type formula to quaternions:
\begin{theorem}
	\label{cauchy}[Quaternionic Cauchy-Binet type formula]
	Suppose that $A^*=A \in \mathbb{Q}^{n \times n}$ and one of the right eigenvalues is zero, $v$ is the corresponding eigenvector to {zero eigenvalue}. Then,
	\begin{equation}\prod _{i=1}^{n-1}{\lambda_i}(A)\det((B\quad v)^*(B\quad v))=\det(B^*AB),\end{equation}
	for any $n \times (n-1)$ quaternion matrix $B$.\end{theorem}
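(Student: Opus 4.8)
The plan is to reduce the identity to a single multiplicativity statement for the quaternion determinant of Hermitian matrices, and then to exploit that real scalars are central. First I would normalize $v$ so that $v^*v=1$ (this is needed: rescaling $v$ multiplies the two sides by different powers of $v^*v$). By Lemma~\ref{right-diag} complete $v$ to a unitary matrix $V=(V_1\ \ v)\in\mathbb{Q}^{n\times n}$ whose last column is $v$, and let $e_n\in\mathbb{Q}^n$ be the $n$-th standard basis vector, so $V^*v=e_n$. Since $Av=0$ and $A^*=A$, a block computation gives $V^*AV=\begin{pmatrix}A'&0\\0&0\end{pmatrix}$ with $A'=V_1^*AV_1$ Hermitian; unitary congruence preserves the right spectrum, so the right eigenvalues of $A'$ are $\lambda_1(A),\dots,\lambda_{n-1}(A)$ and $\det A'=\prod_{i=1}^{n-1}\lambda_i(A)$ by Lemma~\ref{det-eig}. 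Writing $C:=V^*B=\begin{pmatrix}C_1\\ r\end{pmatrix}$ with top block $C_1\in\mathbb{Q}^{(n-1)\times(n-1)}$ and last row $r$, the zero block kills $r$ and yields $B^*AB=C^*(V^*AV)C=C_1^*A'C_1$. On the other side $(C\ \ e_n)=V^*(B\ \ v)$, so $(B\ \ v)^*(B\ \ v)=(C\ \ e_n)^*(C\ \ e_n)=\begin{pmatrix}C^*C&r^*\\ r&1\end{pmatrix}$; successively adding suitable quaternion multiples of the last column to the other columns (and symmetrically for the rows), which are the elementary congruences covered by Lemma~\ref{change-det}, brings this matrix to $\mathrm{diag}(C_1^*C_1,1)$ because $C^*C-r^*r=C_1^*C_1$, hence $\det\bigl((B\ \ v)^*(B\ \ v)\bigr)=\det(C_1^*C_1)$ by Lemma~\ref{det-eig}. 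Thus the theorem is equivalent to
\begin{equation*}
\det(G^*HG)=\det(H)\det(G^*G),\qquad H=H^*\in\mathbb{Q}^{m\times m},\ \ G\in\mathbb{Q}^{m\times m},
\end{equation*}
specialized to $m=n-1$, $H=A'$, $G=C_1$.

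For the multiplicativity lemma I would argue by cases. If $G$ is singular, pick $x\neq0$ with $Gx=0$; then $0$ is a right eigenvalue of the Hermitian matrices $G^*G$ and $G^*HG$, so both sides vanish by Lemma~\ref{det-eig}. If $G$ is invertible, diagonalize the positive definite Hermitian matrix $G^*G=W\Sigma^2W^*$ by Lemma~\ref{right-diag} ($W$ unitary, $\Sigma>0$ real diagonal), and set $Q:=GW\Sigma^{-1}$; one checks $Q^*Q=I$, so $G=Q\Sigma W^*$ and $G^*HG=W(\Sigma\,Q^*HQ\,\Sigma)W^*$. Unitary congruence leaves the right spectrum, hence $\det$, unchanged (Lemma~\ref{det-eig}), so $\det(G^*HG)=\det(\Sigma\,Q^*HQ\,\Sigma)$, $\det(G^*G)=\det(\Sigma^2)=\prod_i\sigma_i^2$, and $\det(Q^*HQ)=\det H$. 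Finally, for real diagonal $\Sigma=\mathrm{diag}(\sigma_1,\dots,\sigma_m)$ and any $X\in\mathbb{Q}^{m\times m}$ one has $\det(\Sigma X\Sigma)=\bigl(\prod_i\sigma_i^2\bigr)\det X$: in each monomial of the permutation sum of Definition~\ref{det}, every index of $\{1,\dots,m\}$ occurs exactly twice (once as a first and once as a second subscript), so the real — hence central — scalars $\sigma_i$ factor out as $\prod_i\sigma_i^2$. Combining, $\det(G^*HG)=\bigl(\prod_i\sigma_i^2\bigr)\det H=\det(G^*G)\det H$.

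The main obstacle is exactly this multiplicativity lemma: the permutation determinant of Definition~\ref{det} is \emph{not} multiplicative over general products, so one cannot write $\det(G^*HG)=\overline{\det G}\,\det H\,\det G$. The way around it is never to multiply two genuinely noncommutative determinants: all the noncommutativity is absorbed into unitary congruences — harmless because for Hermitian matrices $\det$ equals the product of the real right eigenvalues (Lemma~\ref{det-eig}) — and the only place a determinant truly gets rescaled is congruence by a real diagonal matrix, where centrality of $\mathbb{R}$ is precisely what lets the scalars be pulled out. Verifying that the two block computations and the chain of elementary congruences behave as stated (in particular that the determinant is unchanged throughout) is routine, and it is there that the hypotheses $A^*=A$, $Av=0$ and $v^*v=1$ are all used.
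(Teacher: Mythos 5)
Your argument is correct, and its outer skeleton coincides with the paper's proof of Theorem \ref{cauchy}: transform by a unitary matrix whose last column is (a unit multiple of) $v$ so that the zero eigenvalue sits in the last slot, split $V^*B$ into an $(n-1)\times(n-1)$ block and a last row, and kill the cross terms in $(B\ \ v)^*(B\ \ v)$ by the Hermitian-congruence elementary operations of Lemma \ref{change-det}. The genuine difference is where the eigenvalue product gets extracted. The paper fully diagonalizes $A$ via Lemma \ref{right-diag} and then simply asserts $\det(P^*DP)=\prod_{i=1}^{n-1}\lambda_i\det(B_1^*B_1)$, i.e.\ it pulls $\prod\lambda_i$ out of $\det(B_1^*\Lambda B_1)$ with no justification --- which is exactly the delicate point, since the determinant of Definition \ref{det} is not multiplicative over $\mathbb{Q}$. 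You instead stop at the block reduction $V^*AV=\mathrm{diag}(A',0)$ and isolate the needed fact as a lemma, $\det(G^*HG)=\det(H)\det(G^*G)$ for Hermitian $H$, proved by the singular/invertible case split, the factorization $G=Q\Sigma W^*$ with $Q$ unitary and $\Sigma$ real positive diagonal, invariance of the Hermitian determinant under unitary congruence via Lemma \ref{det-eig}, and the observation that in every monomial of Definition \ref{det} each index occurs once as a row and once as a column subscript, so the central real scalars $\sigma_i$ factor out as $\prod_i\sigma_i^2$. This buys a self-contained and more general argument (a reusable multiplicativity statement for Hermitian congruence), at the cost of length; the paper's route is shorter only because it leaves that pivotal step implicit. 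Two small points: your normalization $v^*v=1$ is indeed required (in the paper it is hidden in taking $v$ as a column of a unitary matrix), and completing $v$ to a unitary matrix is a Gram--Schmidt fact rather than literally Lemma \ref{right-diag}, but that is harmless.
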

\begin{proof}
	Let $\lambda_1,\lambda_2,\cdots,\lambda_{n}$ be the right eigenvalues of $A$. In view of Lemma \ref{right-eig}, $\lambda_i$ $(i=1,2,3,\cdots,n-1)$ are all real numbers. Without loss of generality, we set \label{e1}$\lambda_n(A)=0$. There exists a unitary matrix $V\in \mathbb{Q}^{n\times n}$, such that
	$ V^*AV=D $, where $D=\mathrm{diag}(\lambda_1,\lambda_2,\cdots,\lambda_{n-1},0)$.
	Set $$e_n=(0,0,\cdots,0,1)^T,\quad P=V^*B=\begin{pmatrix}B_1\\X\end{pmatrix},$$where $B_1$ is the upper $(n-1) \times (n-1)$ submatrix and $X$ is some $1\times(n-1)$ vector. We see that Eq. $(\ref{e1})$ holds if and only if the identity $$\det(P^*DP)=\prod _{i=1}^{n-1}{\lambda_i}\det((VP\quad Ve_n)^*(VP\quad Ve_n))$$holds. Note that
	$$\det(P^*DP) =\det\left( \begin{pmatrix}B_1^*&X^*\end{pmatrix}
	\mathrm{diag}(\lambda_1,\lambda_2,\cdots,\lambda_{n-1},0)\begin{pmatrix} B_1\\X \end{pmatrix}\right)=\prod _{i=1}^{n-1}{\lambda_i}\det(B_1^*B_1).$$
	Then we have
	\[\begin{split} &\prod _{i=1}^{n-1}{\lambda_i}\det((VP\quad Ve_n)^*(VP\quad Ve_n))\\
		=&\prod _{i=1}^{n-1}{\lambda_i}\det \begin{pmatrix} {B_1^*B_1+X^*X} & X^* \\ X & 1 \end{pmatrix} =\prod _{i=1}^{n-1}{\lambda_i}\det \begin{pmatrix} {B_1^*B_1} & 0 \\ 0 & 1 \end{pmatrix} =\prod _{i=1}^{n-1}{\lambda_i}\det(B_1^*B_1)=\det(P^*DP).
	\end{split}\] The proof is completed.
\end{proof}
Now we are in a position to state the eigenvector-eigenvalue identity for quaternion matrix.
\begin{theorem}
	\label{result}
	[eigenvector-eigenvalue identity for quaternion matrix] Suppose that $A^*=A \in \mathbb{Q}^{n\times n}$, $v_{ij}$ is the $j$-th element of eigenvector corresponding to the eigenvalue $\lambda_i$. $M_j$ is the $(n-1) \times (n-1)$
	submatrix of $A$ that results from deleting the $j$-th column and the $j$-th row, with eigenvalues $\lambda_k(M_j)$. Then
	\begin{equation}\label{e2}\lvert v_{i,j} \rvert^2\prod _{k=1;k\ne i}^{n-1}({\lambda_i}(A)-{\lambda_k}(A))=\prod _{k=1}^{n-1}({\lambda_i}(A)-{\lambda_k}(M_j)).\end{equation}
\end{theorem}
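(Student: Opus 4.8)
The plan is to reduce the general statement to the zero-eigenvalue case already handled by Theorem~\ref{cauchy}, by a translation in the spectral parameter. Fix $i$ and $j$. First I would replace $A$ by $A_i := A - \lambda_i(A) E$; since $A$ is Hermitian and $\lambda_i(A)\in\mathbb{R}$ (Lemma~\ref{right-eig}), $A_i$ is again Hermitian, its right eigenvalues are exactly $\lambda_k(A)-\lambda_i(A)$ with the same unitary diagonalizing matrix $V$ (Lemma~\ref{right-diag}), and $0$ is among them, with eigenvector $v_i$. Likewise the $j$-th principal submatrix of $A_i$ is $M_j - \lambda_i(A) E$, whose right eigenvalues are $\lambda_k(M_j)-\lambda_i(A)$. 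So the claimed identity \eqref{e2} is precisely the identity obtained by applying Theorem~\ref{cauchy} to $A_i$ with a well-chosen $B$, provided I account correctly for the (possibly several) other $k$ with $\lambda_k(A)=\lambda_i(A)$.

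Second, I would choose $B$ in Theorem~\ref{cauchy} to be the $n\times(n-1)$ matrix $E_{\hat{\jmath}}$ consisting of all columns of the identity except the $j$-th. Then $B^* A_i B = M_j - \lambda_i(A)E$, so $\det(B^*A_iB)=\prod_{k=1}^{n-1}(\lambda_k(M_j)-\lambda_i(A))$. On the other side, $(B\ \ v_i)$ is the $n\times n$ matrix obtained from $I$ by replacing its $j$-th column with $v_i$; a cofactor/Laplace expansion along that column (valid for the permutation-defined determinant of Definition~\ref{det} when the other columns are standard basis vectors) gives $\det((B\ \ v_i)^*(B\ \ v_i)) = \lvert\det(B\ \ v_i)\rvert^2 = \lvert v_{i,j}\rvert^2$. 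Hence Theorem~\ref{cauchy} applied to $A_i$ yields
\[
\Bigl(\textstyle\prod_{k=1}^{n-1}\bigl(\lambda_k(A)-\lambda_i(A)\bigr)\Bigr)\,\lvert v_{i,j}\rvert^2 \;=\; \prod_{k=1}^{n-1}\bigl(\lambda_k(M_j)-\lambda_i(A)\bigr),
\]
which, after multiplying both sides by $(-1)^{n-1}$ and checking the index bookkeeping, is exactly \eqref{e2}.

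The main obstacle is one of ordering and degeneracy. Theorem~\ref{cauchy} is stated with the zero eigenvalue singled out as $\lambda_n$ and the product running over $i=1,\dots,n-1$; to match \eqref{e2}, which runs over $k\ne i$ in $\{1,\dots,n-1\}$, I must argue that exactly one copy of the zero eigenvalue of $A_i$ is being peeled off and that the index set $\{1,\dots,n-1\}\setminus\{i\}$ versus $\{1,\dots,n-1\}$ produces the same product up to the $(-1)^{n-1}$ sign — this is purely combinatorial relabeling, but it must be done carefully when $\lambda_i(A)$ is a repeated eigenvalue, in which case several factors on the left vanish and the right side is independently zero (interlacing forces some $\lambda_k(M_j)=\lambda_i(A)$), so the identity holds as $0=0$ and the choice of eigenvector $v_i$ within the eigenspace is immaterial. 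A secondary technical point is justifying the cofactor expansion for the noncommutative determinant of Definition~\ref{det}; this is harmless here because the relevant matrix differs from the identity in a single column, so every surviving permutation term is a product in which all but one factor equals $1$, and the sign $\epsilon(\sigma)=(-1)^{n-r}$ collapses to the usual cofactor sign.
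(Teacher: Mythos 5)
Your proposal follows essentially the same route as the paper's own proof: shift $A$ by $\lambda_i(A)E$ so that the relevant eigenvalue becomes $0$, then apply Theorem \ref{cauchy} with $B$ equal to the identity with its $j$-th column deleted, so that $\det(B^*A_iB)=\det(M_j-\lambda_i(A)E)=\prod_k(\lambda_k(M_j)-\lambda_i(A))$ while the Gram factor equals $\lvert v_{i,j}\rvert^2$ (the sign bookkeeping and the repeated-eigenvalue case come out automatically from Theorem \ref{cauchy}, so the interlacing fallback you sketch is not needed). The one step to repair is the evaluation $\det((B\ \ v_i)^*(B\ \ v_i))=\lvert\det(B\ \ v_i)\rvert^2$, which presumes a multiplicativity property that the noncommutative determinant of Definition \ref{det} is not granted in this paper; instead evaluate the Hermitian Gram matrix $\begin{pmatrix} E_{n-1} & B^*v_i\\ v_i^*B & 1\end{pmatrix}$ directly by the column-and-row operations of Lemma \ref{change-det}, exactly as in the paper's proof of Theorem \ref{cauchy}, which yields $1-\sum_{k\ne j}\lvert v_{i,k}\rvert^2=\lvert v_{i,j}\rvert^2$.
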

\begin{proof}Without loss of generality, we take $j=1,i=n$. We shift $A$ by $\lambda_n(A)E_n$ so that $\lambda_n(A)=0.$ This also shifts all the remaining eigenvalues of $A$ as well as those of $M_j$, then Eq. (\ref{result}) is equivalent to
	\begin{equation}\label{e3} \lvert v_{n,1} \rvert^2\prod _{k=1}^{n-1}{\lambda_k}(A)=\prod _{k=1}^{n-1}{\lambda_k}(M_1).
	\end{equation}
	Let $B=\begin{pmatrix}0\\E_{n-1} \end{pmatrix}$. By Eq. ({\ref{cauchy}}) and (\ref{e3}), we  get
	\begin{equation*}
		\begin{split}
			\lvert v_{n,1} \rvert^2\prod _{k=1}^{n-1}{\lambda_k}(A)&=\prod _{k=1}^{n-1}{\lambda_k}(A)\det((B,v_n)^*(B,v_n))\\&=\det(B^*AB)=\det(M_1)=\prod _{k=1}^{n-1}{\lambda_k}(M_1).
		\end{split}
	\end{equation*}The proof is completed.
\end{proof}
\begin{corollary}\label{submatrix}If $A=A^*\in \mathbb{Q}^{n \times n}$, the eigenvector element $v_{ij}$ is $0$, then $M_j$ has at least one right eigenvalue equal to $\lambda_i$.
\end{corollary}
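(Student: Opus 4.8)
The plan is to obtain this as an immediate consequence of Theorem~\ref{result}. First I would specialize the eigenvector--eigenvalue identity \eqref{e2} to the index pair $(i,j)$ under consideration. Since $v_{i,j}=0$ by hypothesis, the left-hand side $\lvert v_{i,j}\rvert^2\prod_{k=1;k\ne i}^{n-1}(\lambda_i(A)-\lambda_k(A))$ is identically $0$, no matter what value the product over the remaining eigenvalues of $A$ takes. Consequently the right-hand side $\prod_{k=1}^{n-1}(\lambda_i(A)-\lambda_k(M_j))$ must vanish as well.

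Next I would observe that this last product is a product of \emph{real} numbers. Indeed, $A$ is Hermitian, so by Lemma~\ref{right-eig} its right eigenvalues $\lambda_i(A)$ are real; and $M_j$, being the principal submatrix of a Hermitian matrix obtained by deleting the $j$-th row and column, is again Hermitian, so by the same lemma its right eigenvalues $\lambda_k(M_j)$ are real too. A product of real numbers is zero only if one of its factors is zero, so there must exist some $k\in\{1,\dots,n-1\}$ with $\lambda_i(A)-\lambda_k(M_j)=0$, i.e.\ $\lambda_k(M_j)=\lambda_i(A)$. This is precisely the assertion that $M_j$ has at least one right eigenvalue equal to $\lambda_i$.

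I do not expect any real obstacle here: the only point worth a line of justification is that $M_j$ is Hermitian, so that ``right eigenvalue'' refers to the real spectrum governed by Lemmas~\ref{right-eig}--\ref{right-diag} and by Theorem~\ref{result}; everything else is the elementary fact that a vanishing finite product over $\mathbb{R}$ forces a vanishing factor. In particular one needs neither the simplicity of $\lambda_i$ as an eigenvalue of $A$ nor the non-vanishing of the $A$-side product, which makes the argument robust even when $A$ has repeated eigenvalues.
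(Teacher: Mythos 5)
Your argument is correct and is exactly the route the paper intends: the corollary is stated as an immediate consequence of Theorem~\ref{result}, obtained by setting $v_{i,j}=0$ in \eqref{e2} and concluding that the (real) product $\prod_{k=1}^{n-1}(\lambda_i(A)-\lambda_k(M_j))$ vanishes, hence some factor does. Your added remark that $M_j$ is Hermitian (so its right eigenvalues are real, and indeed a vanishing product in the division ring $\mathbb{Q}$ already forces a vanishing factor) is a reasonable justification the paper leaves implicit.
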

\begin{corollary}\label{left-eig}If $A=A^*\in \mathbb{Q}^{n \times n}$ and the left eigenvalue $\hat{\lambda}$ is real number, then it's equal to some right eigenvalue $\lambda_i$ and
	$$ \lvert \hat{v}_j \rvert^2\prod _{k=1;k\ne i}^{n-1}(\hat{\lambda}(A)-{\lambda_k}(A))=\prod _{k=1}^{n-1}(\hat{\lambda}(A)-{\lambda_k}(M_j)).$$
\end{corollary}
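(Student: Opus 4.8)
The plan is to deduce this corollary directly from Theorem \ref{result}, after the elementary observation that a \emph{real} left eigenvalue is automatically a right eigenvalue sharing the same eigenvector. Recall that $\hat\lambda$ being a left eigenvalue of $A$ means $A\hat v=\hat\lambda\hat v$ for some nonzero $\hat v\in\mathbb{Q}^n$, which, after rescaling, I would take to be a unit vector. Since $\hat\lambda\in\mathbb{R}$ lies in the centre of the division ring $\mathbb{Q}$, left and right multiplication by $\hat\lambda$ coincide, so $A\hat v=\hat\lambda\hat v=\hat v\hat\lambda$; hence $\hat\lambda$ is also a right eigenvalue of $A$, with $\hat v$ a corresponding unit right eigenvector. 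By Lemma \ref{right-eig}, $A$ has exactly $n$ right eigenvalues $\lambda_1(A),\dots,\lambda_n(A)$, and all of them are real, so $\hat\lambda=\lambda_i(A)$ for some index $i$; this establishes the first assertion of the corollary.

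Next I would simply feed this right eigenpair into Theorem \ref{result}. With the identification above, $\hat v$ is exactly the (unit) eigenvector associated to the eigenvalue $\lambda_i$ in the sense of that theorem, so $\hat v_j=v_{i,j}$. Replacing $\hat\lambda$ by $\lambda_i(A)$ in the conclusion of Theorem \ref{result} yields
\[
\lvert\hat v_j\rvert^2\prod_{k=1;k\ne i}^{n-1}\bigl(\hat\lambda(A)-\lambda_k(A)\bigr)=\prod_{k=1}^{n-1}\bigl(\hat\lambda(A)-\lambda_k(M_j)\bigr),
\]
which is precisely the claimed identity, and the proof is complete.

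I do not expect a genuine obstacle here: the whole argument rests on the centrality of $\mathbb{R}$ in $\mathbb{Q}$, which converts the left eigenpair into a right eigenpair, after which Theorem \ref{result} applies verbatim. The only point deserving a remark is the degenerate case where $\lambda_i(A)$ is a repeated right eigenvalue, so that the left-hand product is zero: then the right-hand product is zero as well, either because Theorem \ref{result} (already proved) forces the equality, or, independently, because deleting one row and column can lower the multiplicity of $\lambda_i(A)$ by at most one, so $\lambda_i(A)$ survives as an eigenvalue of $M_j$ (the interlacing behaviour underlying Corollary \ref{submatrix}). Hence the identity holds for every real left eigenvalue with no additional hypotheses.
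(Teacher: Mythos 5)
Your argument is correct and is essentially the paper's own route: the centrality of real scalars in $\mathbb{Q}$ turns the left eigenpair $A\hat v=\hat\lambda\hat v=\hat v\hat\lambda$ into a right eigenpair, so $\hat\lambda$ must be one of the $n$ real right eigenvalues (Lemma \ref{right-eig}), and the identity is then Theorem \ref{result} applied to that eigenpair. You are in fact somewhat more careful than the paper's one-line remark (which states the implication in the reverse direction), and your added observations---that only $\lvert\hat v_j\rvert$ matters since a unit right eigenvector is determined only up to right multiplication by a unit quaternion, and that the repeated-eigenvalue case degenerates to $0=0$---are sound refinements of the same approach.
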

In fact, the right eigenvalues are real numbers when $A=A^*$ and
$$Av=v\lambda =\lambda v.$$
It means that the left eigenvalues of real numbers exist and the right eigenvalues belong to the set of left eigenvalues.

\begin{remark}
	The formula in Theorem \ref{result} provides a new method different from the traditional method   to calculate the element modules of the right eigenvalue of the quaternion matrix corresponding to the unit eigenvector. A simple example is given to verify the correctness of Theorem \ref{result}.
\end{remark}

\begin{example}
	\label{model}Calculate the element modules of the right eigenvalue of the quaternion matrix $A=\begin{pmatrix}3&\mathbf{i}-\mathbf{j}+\mathbf{k}\\
		-\mathbf{i}+\mathbf{j}-\mathbf{k}&2 \end{pmatrix}.$
\end{example}
\noindent {\bf Traditional method}:  Direct computation gives
$$\det(A)=3,
\lambda_1=\frac{5}{2}+\frac{\sqrt{13}}{2}, \lambda_2=\frac{5}{2}-\frac{\sqrt{13}}{2}.
$$
The unit right eigenvector corresponding to $\lambda_1$ is $$ v=\begin{pmatrix}\displaystyle\frac{2\sqrt6(\mathbf{i}-\mathbf{j}+\mathbf{k})}{(\sqrt{13}-1)\sqrt{13+\sqrt{13}}}\\
	\dfrac{\sqrt6}{\sqrt{13+\sqrt{13}}}
\end{pmatrix}.$$
Then
$$\lvert v_1 \rvert^2=\frac{\sqrt{13}+13}{26},
\quad\lvert v_2 \rvert^2= \frac{13-\sqrt{13}}{26}.
$$
\noindent {\bf Method of by using eigenvector-eigenvalue identity}: 	Using the result of Theorem \ref{result}, we get
$$\lvert v_1 \rvert^2=\frac{\lambda_1-\lambda_2}{\lambda_1-\lambda(M_1)}
=\frac{\sqrt{13}+13}{26},
\quad\lvert v_2 \rvert^2=\frac{\lambda_1-\lambda_2}{\lambda_1-\lambda(M_2)}
=\frac{13-\sqrt{13}}{26},
$$
which is consistent with the result obtained by traditional calculation. However, by using eigenvector-eigenvalue identity, it is more convenient for us  to calculate the element modules of the right eigenvector of the quaternion matrix.


We remark that Theorem \ref{result} is to compute the element modules of the right eigenvalue. In order to compute the right eigenvectors for quaternion matrix, we provide a new method based on quaternion adjoint matrix in next subsection.

\subsection{Caculations of eigenvectors based on quaternion adjoint matrix }

To calculate the eigenvectors of quaternion matrices, we first introduce  the concepts of row expansion \cite{row} and adjoint of quaternion matrices \cite{qadj}.\\
\begin{definition}\label{row expansion}
	$A=(a_{ij})$ is any quaternion matrix of order $n$.
	$$|A|^{row}=\sum{\pm a_{1p}a_{pq}\cdots a_{st}}$$
	is the row expansion of $A$, where each term corresponds to the following permutation group:
	\begin{equation}\label{group}
		\begin{pmatrix}
			1&p&q&\cdots&s\\
			p&q&r&\cdots&t
		\end{pmatrix}.
	\end{equation}
	The permutation group $(\ref{group})$ can be decomposed into the product of $d$ independent rotations, and the positive and negative of each item is the same as $(-1)^{n-d}$. This is the same as the usual sign rule for determinant expansions. However, the factor order of each item should be arranged according to the principle that the first factor can be selected in the first row. If it is $a_{1p}$ and $p\ne 1$, the second factor can be selected in the $p$ row; If the first factor is $a_{11}$, the second factor must be in the second row; If the first factor is $a_{1p}$ and the second factor is $a_{p1}$, the third must be the row with the smallest number of rows among the remaining rows except the first row and the $p-th$ row. The other factor selection methods follow this example.
\end{definition}

\begin{example}
	$\left|\begin{array}{cccc}
		a_{11}&a_{12}&a_{13}&a_{14}\\
		a_{21}&a_{22}&a_{23}&a_{24}\\
		a_{31}&a_{32}&a_{33}&a_{34}\\
		a_{41}&a_{42}&a_{43}&a_{44}
	\end{array}\right|^{row}\!\!\!\!\!\!\!\!\!=a_{11}a_{22}a_{33}a_{44}+a_{11}a_{23}a_{34}a_{42}+a_{11}a_{24}a_{43}a_{32}-a_{11}a_{24}a_{42}a_{33}\\
	-\cdots +a_{12}a_{21}a_{34}a_{43}+ \cdots +a_{14}a_{41}a_{23}a_{32}.
	$
\end{example}
\begin{definition}\label{qadj}
	$\mathrm{qadj}(A)$ is the adjoint matrix of $n$-order quaternion Hermitian matrix, which is defined by
	\begin{equation}\label{specific}
		\mathrm{qadj}(A)=\begin{pmatrix}
			\quad|A_{11}|^{row}&-|A_{21}|^{row}&-|A_{31}|^{row}&\cdots&-|A_{n1}|^{row}\\
			-|A_{12}|^{row}&\quad|A_{22}|^{row}&-|A_{32}|^{row}&\cdots&-|A_{n2}|^{row}\\
			-|A_{13}|^{row}&-|A_{23}|^{row}&\quad|A_{33}|^{row}&\cdots&-|A_{n3}|^{row}\\
			\vdots&\vdots&\vdots& &\vdots\\
			-|A_{1n}|^{row}&-|A_{2n}|^{row}&-|A_{3n}|^{row}&\cdots&\quad|A_{nn}|^{row}
		\end{pmatrix},
	\end{equation}
	where $A_{ij}$ is the natural submatrix (see \cite{row}), i.e., remove row $i$ and column $j$ of the matrix $A$ and rearrange $j-2$, $1$, $2$, $\cdots$, $j-1$ rows  into $1$, $2$, $\cdots$, $j-2$, $j-1$ rows, and the remaining $n-j$ rows remain in their original positions.
\end{definition}
\begin{lemma}\label{qadj2}(see \cite{qadj})
	If $A\in \mathbb{Q}^{n\times n}$ is a quaternion Hermitian matrix, then
	\begin{equation}\label{qadj_i}
		\mathrm{qadj}(A)A=A\mathrm{qadj}(A)=\det(A)E.
	\end{equation}
\end{lemma}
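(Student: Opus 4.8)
The plan is to mimic the classical proof of the adjugate identity $\mathrm{adj}(A)A=\det(A)I$ while carefully tracking the order in which quaternions are multiplied. Set $\epsilon_{ij}=1$ if $i=j$ and $\epsilon_{ij}=-1$ otherwise, so that Definition \ref{qadj} gives the $(i,k)$ entry of $A\,\mathrm{qadj}(A)$ as $\sum_{j=1}^{n}\epsilon_{jk}\,a_{ij}\,|A_{kj}|^{row}$. I would first reduce to the single identity $A\,\mathrm{qadj}(A)=\det(A)E$: for Hermitian $A$ one verifies that $\mathrm{qadj}(A)$ is again Hermitian (from the symmetry of the natural submatrices and the way conjugation reverses a row expansion), while $\det(A)=\prod_i\lambda_i(A)$ is real by Lemmas \ref{right-eig} and \ref{det-eig}; hence taking the conjugate transpose of $A\,\mathrm{qadj}(A)=\det(A)E$ at once yields $\mathrm{qadj}(A)\,A=\det(A)E$.

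For a diagonal entry, $\sum_{j}\epsilon_{kj}\,a_{kj}\,|A_{kj}|^{row}$ is exactly the Laplace-type expansion of the row determinant $|A|^{row}$ along its $k$-th row: the natural submatrix $A_{kj}$ (delete row $k$ and column $j$, then perform the prescribed cyclic reshuffling of the initial rows) and the placement of $a_{kj}$ on the left are designed so that each monomial of $a_{kj}\,|A_{kj}|^{row}$ reproduces, \emph{in the same order}, the monomial of $|A|^{row}$ indexed by the matching permutation. Granting the row-expansion lemma of \cite{row} and the fact that $|A|^{row}=\det(A)$ for Hermitian $A$ (both are the common real value taken on a Hermitian matrix by the standard quaternionic determinants, so the differing cycle-ordering conventions of Definition \ref{det} and of the row expansion do not matter here), every diagonal entry of $A\,\mathrm{qadj}(A)$ equals $\det(A)$.

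The off-diagonal entries $i\ne k$ are where the real work lies. The sum $\sum_j\epsilon_{jk}\,a_{ij}\,|A_{kj}|^{row}$ is the row-$k$ expansion of the matrix $\widehat A$ obtained from $A$ by overwriting its $k$-th row with a copy of its $i$-th row. In the commutative world $\widehat A$ has two equal rows, hence $\det\widehat A=0$ by antisymmetry, and one is tempted to pair each permutation with its composition with the transposition of the two equal rows. This fails over $\mathbb{Q}$: the row expansion is genuinely \emph{not} alternating — one can exhibit a quaternion matrix with two identical rows whose row expansion is nonzero — because that transposition changes the cycle structure and hence the prescribed factor order, and monomials with the same factors in a different order need not be equal. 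I expect the genuine obstacle to be precisely this point: one must instead use the Hermitian symmetry of $A$, regrouping the surviving monomials into pairs that cancel only after invoking $a_{ts}=\overline{a_{st}}$ and the reality of the diagonal $a_{ss}$, which make products such as $a_{st}\overline{a_{st}}$ into central real scalars that can be commuted past the other factors. Pushing this bookkeeping through (as in \cite{row,qadj}) gives $\sum_j\epsilon_{jk}\,a_{ij}\,|A_{kj}|^{row}=0$ for $i\ne k$, which finishes $A\,\mathrm{qadj}(A)=\det(A)E$ and, with the symmetrisation above, the lemma. An alternative route avoids most of this combinatorics: a Hermitian $A$ with $\det(A)\ne0$ is invertible (diagonalise via Lemma \ref{right-diag}), Cramer's rule for Hermitian quaternion systems then identifies $\det(A)A^{-1}$ entrywise with $\mathrm{qadj}(A)$, and the general case follows by continuity since every entry of $\mathrm{qadj}(A)A-\det(A)E$ is a quaternion-valued polynomial in the real coordinates of $A$ — but this merely transfers the same non-commutative cancellation into the proof of Cramer's rule.
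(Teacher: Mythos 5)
The first thing to note is that the paper does not prove this statement at all: Lemma \ref{qadj2} is imported verbatim from \cite{qadj} (H.\ L\"u and Z.\ Yang), so there is no in-paper argument to measure your proposal against. Judged on its own terms, your plan is sensible and you have correctly located where non-commutativity bites, but what you have written is a strategy, not a proof. The reduction step already rests on an unproved claim, namely that $\mathrm{qadj}(A)$ is again Hermitian when $A$ is (true in small examples, e.g.\ for $n=2$ one gets $\begin{pmatrix} d & -b\\ -\bar b & a\end{pmatrix}$, but you only assert it); the diagonal case is disposed of by ``granting'' both the row-expansion theorem of \cite{row} and the equality $|A|^{row}=\det(A)$ for Hermitian $A$; and, decisively, the off-diagonal vanishing --- which you yourself identify as the entire content of the lemma once the alternating-rows argument is unavailable over $\mathbb{Q}$ --- is never carried out: ``pushing this bookkeeping through (as in \cite{row,qadj})'' is a citation of the result's source, not an argument. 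Your alternative route via Cramer's rule and continuity has the same character, as you concede: Cramer's rule for quaternion Hermitian systems (\cite{chen1,qadj}) is essentially equivalent to the identity $\det(A)A^{-1}=\mathrm{qadj}(A)$ being proved.

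So the situation is: either one is allowed to quote \cite{row,qadj}, in which case the lemma itself can simply be quoted (which is exactly what the paper does) and your intermediate reductions add little; or one is not, in which case your proposal has a genuine gap precisely at the step you flag --- the cancellation $\sum_j \epsilon_{jk}\,a_{ij}\,|A_{kj}|^{row}=0$ for $i\ne k$, which must be established by an explicit pairing of monomials using $a_{ts}=\overline{a_{st}}$ and the reality of the diagonal, and for which you give no actual mechanism. Until that computation (or a genuinely independent proof of quaternionic Cramer's rule) is written out, the lemma is cited, not proved.
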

Based on the above definitions and theorems, we now give a new method to calculate the quaternion eigenvector. Even the set of eigenvectors can be standard orthogonal.
\begin{theorem}\label{eigenvector}
	$A=A^*\in \mathbb{Q}^{n \times n}$, $\lambda_1, \lambda_2, \cdots, \lambda_n$ is the right eigenvalues of $A$ and $v_1, v_2, \cdots, v_n$ are the corresponding standard orthogonal eigenvectors, then
	\begin{equation}\label{calculate}
		\mathrm{qadj}(\lambda_i(A)E_n-A)=\prod_{k=1;k\ne i}^n(\lambda_i(A)-\lambda_k(A))v_iv_i^*.
	\end{equation}
\end{theorem}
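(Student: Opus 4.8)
The plan is to reduce the whole statement, via Lemma~\ref{right-diag}, to the case of a real diagonal matrix, where $\mathrm{qadj}$ can be read off by inspection, and then to transport the answer back by a unitary change of basis. Throughout put $H:=\lambda_i(A)E_n-A$. Since $\lambda_i(A)\in\mathbb R$ and $A^*=A$, the matrix $H$ is Hermitian, and from $Av_k=v_k\lambda_k(A)$ we get $Hv_k=v_k\bigl(\lambda_i(A)-\lambda_k(A)\bigr)$, so $H$ has right eigenvalues $\mu_k:=\lambda_i(A)-\lambda_k(A)$ with the same orthonormal eigenvectors $v_k$; in particular $\mu_i=0$. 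By Lemma~\ref{right-diag} I would pick a unitary $V$ whose $k$-th column is $v_k$ (the vectors $v_k$ are only determined up to a unit-quaternion right factor, but $v_iv_i^*$ is unaffected by this), so that $V^*HV=D:=\mathrm{diag}(\mu_1,\dots,\mu_n)$, a real diagonal matrix.

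Two preliminary facts are needed. \emph{First}, $\mathrm{qadj}$ is equivariant under unitary conjugation on invertible Hermitian matrices: if $K^*=K$ is invertible then Lemma~\ref{qadj2} gives $\mathrm{qadj}(K)=\det(K)K^{-1}$, and since $V^*KV$ is again Hermitian with the same right eigenvalues as $K$ (conjugation replaces each eigenvector $w$ by $V^*w$), Lemma~\ref{det-eig} yields $\det(V^*KV)=\det(K)$, whence $\mathrm{qadj}(V^*KV)=\det(K)(V^*KV)^{-1}=V^*\bigl(\det(K)K^{-1}\bigr)V=V^*\mathrm{qadj}(K)V$. \emph{Second}, $\mathrm{qadj}$ of a diagonal matrix is diagonal: by Definition~\ref{qadj}, for $i\neq j$ the natural submatrix $A_{ij}$ of a diagonal matrix has an entire zero column, so $|A_{ij}|^{row}=0$, while $|A_{jj}|^{row}$ is the ordinary product of the remaining diagonal entries; hence $\mathrm{qadj}\bigl(\mathrm{diag}(d_1,\dots,d_n)\bigr)=\mathrm{diag}\bigl(\prod_{k\ne 1}d_k,\dots,\prod_{k\ne n}d_k\bigr)$.

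Since $\det H=\prod_k\mu_k=0$ the matrix $H$ itself is singular, so I would perturb. For all small $\varepsilon\neq0$ the Hermitian matrix $H_\varepsilon:=H+\varepsilon E_n$ has right eigenvalues $\varepsilon+\mu_k$, all nonzero, hence is invertible, and $V^*H_\varepsilon V=D_\varepsilon:=\mathrm{diag}(\varepsilon+\mu_1,\dots,\varepsilon+\mu_n)$. Combining the two facts above,
\[
\mathrm{qadj}(H_\varepsilon)=V\,\mathrm{qadj}(D_\varepsilon)\,V^*=V\,\mathrm{diag}\Bigl(\textstyle\prod_{k\ne 1}(\varepsilon+\mu_k),\dots,\prod_{k\ne n}(\varepsilon+\mu_k)\Bigr)V^*.
\]
Every diagonal entry of $\mathrm{qadj}(D_\varepsilon)$ with index $j\neq i$ contains the factor $\varepsilon+\mu_i=\varepsilon$, so it tends to $0$ as $\varepsilon\to0$, while the $(i,i)$ entry tends to $\prod_{k\ne i}\mu_k=\prod_{k\ne i}\bigl(\lambda_i(A)-\lambda_k(A)\bigr)$. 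Because $\mathrm{qadj}$ is a row-expansion (polynomial) function of the matrix entries, it is continuous, so letting $\varepsilon\to0$ gives
\[
\mathrm{qadj}(H)=\Bigl(\textstyle\prod_{k\ne i}(\lambda_i(A)-\lambda_k(A))\Bigr)\,VE_{ii}V^*=\textstyle\prod_{k\ne i}(\lambda_i(A)-\lambda_k(A))\,v_iv_i^*,
\]
using $VE_{ii}V^*=(Ve_i)(Ve_i)^*=v_iv_i^*$. This is exactly \eqref{calculate}; note that if $\lambda_i(A)$ is a repeated eigenvalue both sides are $0$, so no separate case is required.

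The main obstacle I anticipate is justifying the unitary-equivariance of $\mathrm{qadj}$ without circularity. Over a commutative field $\mathrm{adj}(PAP^{-1})=P\,\mathrm{adj}(A)P^{-1}$ follows from $\mathrm{adj}(XY)=\mathrm{adj}(Y)\mathrm{adj}(X)$, but over $\mathbb Q$ this route is blocked because $\mathrm{qadj}$ is only defined on Hermitian matrices and a product of two Hermitian matrices is not Hermitian; hence the detour through invertibility and the formula $\mathrm{qadj}(K)=\det(K)K^{-1}$, which itself rests on the invariance $\det(V^*KV)=\det(K)$ delivered by Lemmas~\ref{right-diag}--\ref{det-eig}. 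A secondary technical point is the vanishing $|A_{ij}|^{row}=0$ for the off-diagonal natural submatrices of a diagonal matrix: it uses the elementary observation that every monomial in a row expansion picks up each column index exactly once, so a zero column annihilates the expansion.
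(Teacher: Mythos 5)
Your argument is correct and rests on the same pillars as the paper's proof: Lemma~\ref{qadj2} in the form $\mathrm{qadj}(K)=\det(K)K^{-1}$ away from the singular point, the unitary spectral decomposition of the Hermitian matrix, and continuity of the polynomial entries of $\mathrm{qadj}$ to pass to the limit at the eigenvalue. Your $\varepsilon$-shift $H+\varepsilon E_n$ with conjugation to diagonal form is just the paper's limit $\lambda\to\lambda_i(A)$ of $\mathrm{qadj}(\lambda E_n-A)=\sum_j\prod_{k\ne j}(\lambda-\lambda_k(A))\,v_jv_j^*$ in slightly different packaging, so this is essentially the same proof.
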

\begin{proof}
	According to (\ref{qadj_i}), if $\lambda$ is not the eigenvalue of $A$, we have
	$$\mathrm{qadj}(\lambda E_n-A)=\det(\lambda E_n-A)(\lambda E_n-A)^{-1},$$
	which leads to
	\[\begin{split}
		\mathrm{qadj}(\lambda E_n-A)v_j&=\det(\lambda E_n-A)(\lambda-\lambda_j(A)^{-1})v_j
		\\&=\prod_{k=1;k\ne j}(\lambda-\lambda_k(A))v_j,\quad j=1, 2, \cdots, n.
	\end{split}\]
	Then \begin{equation}\label{111}\mathrm{qadj}(\lambda E_n-A)=\sum_{j=1}^n\prod_{k=1;k\ne j}^n(\lambda-\lambda_k(A))v_jv_j^*.\end{equation}\par
	According to the Definition \ref{row expansion}, \ref{qadj}, we can see that every element of matrix $\mathrm{qadj}(\lambda(A)E_n-A)$ of order $n$ is a polynomial with respect to $\lambda$, which means  $\mathrm{qadj}(\lambda(A)E_n-A)$ is a continuous function with respect to $\lambda$. By taking the limit $\lambda\to \lambda_i(A)$ on both sides of  equation (\ref{111}), we get
	$$\mathrm{qadj}(\lambda_i(A)E_n-A)=\prod_{k=1,k\ne i}^n(\lambda_i(A)-\lambda_k(A))v_iv_i^*$$
\end{proof}
If $A$ is a $n\times n$ Hermitian matrix over complex field, then $\mathrm{qadj}(A)$ is reduced to $\mathrm{adj}(A)$. Then we can get the following identity proposed by Peter B. Denton, Stephen J. Parke, Terence Tao, Xining Zhang \cite{tao-eig}.
\begin{corollary}\label{tao}
	If $A$ is a $n\times n$ Hermitian matrix over complex field,
	$$	\mathrm{adj}(\lambda_i(A)E_n-A)=\prod_{k=1;k\ne i}^n(\lambda_i(A)-\lambda_k(A))v_iv_i^*.$$
\end{corollary}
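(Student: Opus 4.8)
The plan is to obtain the corollary from Theorem~\ref{eigenvector} by showing that the quaternion adjoint $\mathrm{qadj}$ specializes to the classical adjugate $\mathrm{adj}$ on complex Hermitian matrices. The first step is to check that for any square matrix $B$ of order $m$ whose entries lie in a commutative ring one has $|B|^{row}=\det B$. Indeed, the products enumerated in Definition~\ref{row expansion} are indexed exactly by the disjoint-cycle decompositions of the permutations of $\{1,\dots,m\}$ — a term $\pm b_{1p}b_{pq}\cdots b_{st}$ records the permutation $1\mapsto p\mapsto q\mapsto\cdots$ — and the prescribed sign $(-1)^{m-d}$ for a product splitting into $d$ cycles is precisely the signature of that permutation. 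When the entries commute, the "first factor selectable in the first row" ordering rule is vacuous, so each term equals the corresponding Leibniz term of $\det B$; hence $|B|^{row}=\det B$, and in particular $|A_{ij}|^{row}=\det(A_{ij})$ for the natural submatrices $A_{ij}$ of a complex Hermitian $A$.

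The second step is to reconcile the signs. Reading off \eqref{specific}, the $(i,j)$ entry of $\mathrm{qadj}(A)$ is $\pm|A_{ji}|^{row}=\pm\det(A_{ji})$, with a $+$ on the diagonal and a $-$ off it; on the other hand $A_{ji}$ is, by the description following \eqref{specific}, the ordinary submatrix of $A$ obtained by deleting row $j$ and column $i$, with its surviving rows cyclically reordered, and that reordering multiplies the determinant by a fixed sign. A direct count shows that this row-reordering sign, combined with the explicit prefactor of \eqref{specific}, equals $(-1)^{i+j}$, so that $(\mathrm{qadj}(A))_{ij}$ is $(-1)^{i+j}$ times the determinant of the submatrix of $A$ with row $j$ and column $i$ removed — exactly the $(j,i)$-cofactor of $A$, i.e.\ the $(i,j)$ entry of $\mathrm{adj}(A)$. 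Thus $\mathrm{qadj}(A)=\mathrm{adj}(A)$ for every complex Hermitian $A$, consistent with Lemma~\ref{qadj2}, which here reduces to the classical $\mathrm{adj}(A)A=\det(A)E$. Applying this with $A$ replaced by $\lambda_i(A)E_n-A$, which is again Hermitian since $\lambda_i(A)$ is real, and invoking \eqref{calculate} from Theorem~\ref{eigenvector}, yields the corollary.

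I expect the only genuinely delicate point to be the sign bookkeeping in the second step: matching the cyclic row-rearrangement sign built into the definition of the natural submatrix $A_{ji}$ against both the $\pm$ pattern of \eqref{specific} and the cofactor sign $(-1)^{i+j}$. Everything else is routine. If one prefers to bypass that bookkeeping, there is a self-contained alternative mirroring the proof of Theorem~\ref{eigenvector} line for line: for complex $\lambda$ not an eigenvalue of $A$ the classical adjugate identity gives $\mathrm{adj}(\lambda E_n-A)=\det(\lambda E_n-A)(\lambda E_n-A)^{-1}=\sum_{j=1}^{n}\prod_{k\ne j}(\lambda-\lambda_k(A))\,v_jv_j^{*}$, and since the entries of $\mathrm{adj}(\lambda E_n-A)$ are polynomials in $\lambda$ one may let $\lambda\to\lambda_i(A)$, the terms with $j\ne i$ vanishing because they carry the factor $\lambda-\lambda_i(A)$, to obtain $\mathrm{adj}(\lambda_i(A)E_n-A)=\prod_{k\ne i}(\lambda_i(A)-\lambda_k(A))\,v_iv_i^{*}$.
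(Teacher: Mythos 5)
Your proposal follows essentially the same route as the paper, which simply observes that over the complex field $\mathrm{qadj}$ reduces to the classical adjugate $\mathrm{adj}$ and then invokes Theorem~\ref{eigenvector}; you merely supply the sign and cycle-structure bookkeeping that the paper leaves implicit. Your fallback argument (classical adjugate identity plus the limit $\lambda\to\lambda_i(A)$) is also just the paper's proof of Theorem~\ref{eigenvector} specialized to the complex case, so both versions are correct and not genuinely different from the paper's.
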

Through equation (\ref{calculate}), the corresponding elements of row $p$ and column $q$ of the left and right matrices are equal, we can get
\begin{equation}\label{matrix equal}
	\left\{\begin{array}{rl}
		|(\lambda_i(A)E_n-A)_{pp}|^{row}&=\prod_{k=1;k\ne i}^n(\lambda_i(A)-\lambda_k(A))|v_{ip}|^2,\\
		-|(\lambda_i(A)E_n-A)_{qp}|^{row}&=\prod_{k=1;k\ne i}^n(\lambda_i(A)-\lambda_k(A))v_{ip}v_{iq},\quad p\ne q.
	\end{array}\right.
\end{equation}

If $\lambda_i(A)$ is not equal to the other eigenvalues of $A$, we can calculate the normalized eigenvector corresponding to $\lambda_i(A)$ through the formula.
Noticing that one of elements of the eigenvectors is not zero at least, we assume $v_{im}\ne 0$.
Since the right eigenvalue of the Hermitian matrix is a real number, it is still an eigenvector to multiply the right of the eigenvector by a quaternion:
$$Av_i=v_i\lambda_i \iff Av_iq=v_iq\lambda_i, \quad q\in \mathbb{Q},$$
then we can use this method to control $v_{im}$ to be a real number and calculate the other elements in the vector $v_i$ in combination with the formula (\ref{matrix equal}):\\
\begin{equation}\label{matrix equal2}
	v_{ij}=\left\{\begin{array}{ll}
		\displaystyle
		\sqrt{\frac{|(\lambda_i(A)E_n-A)_{mm}|^{row}}{\prod_{k=1;k\ne i}^n(\lambda_i(A)-\lambda_k(A))}},& j=m,\\&\\\displaystyle
		\frac{-v_{im}^{-1}\overline{|(\lambda_i(A)E_n-A)_{jm}|^{row}}}{\prod_{k=1; k\ne i}^n(\lambda_i(A)-\lambda_k(A))},& j\ne m.
	\end{array}\right.
\end{equation}
We  illustrate  the effectiveness of \eqref{calculate} and \eqref{matrix equal2} through the following example.
\begin{example}\label{calculate2}
	$A=\begin{pmatrix}3&\mathbf{i}-\mathbf{j}+\mathbf{k}\\
		-\mathbf{i}+\mathbf{j}-\mathbf{k}&2 \end{pmatrix}.$
\end{example}
In  Example \ref{model}, we already know that $$\lambda_1=\frac{5}{2}+\frac{\sqrt{13}}{2},\lambda_2=\frac{5}{2}-\frac{\sqrt{13}}{2}.$$
By using Eq. $(\ref{matrix equal2})$, we get $$v_{12}=\sqrt{\frac{|(\lambda_1(A)E-A)_{22}|^{row}}{\lambda_1(A)-\lambda_2(A)}}=\sqrt{\frac{13-\sqrt{13}}{26}},$$ $$v_{11}=\frac{-v_{12}^{-1}\overline{|(\lambda_1(A)E-A)_{12}|^{row}}}{\lambda_1(A)-\lambda_2(A)}=\sqrt{\frac{2}{13-\sqrt{13}}}(\mathbf{i}-\mathbf{j}+\mathbf{k}).$$
We can verify that $v_1=(v_{11},v_{12})^T$ satisfies the equations $Av_1=v_1\lambda_1$ and $|v_{11}|^2+|v_{12}|^2=1$ by direct calculation, which shows that $v_1$ is a unit eigenvector corresponding to the eigenvalue $\lambda_1$.

In fact, we give an algorithm and program to compute the right eigenvectors of the quaternion matrix. By using our program in next section, we see that the program result verifies the obtained identities \eqref{calculate} and \eqref{matrix equal2}. The following list is the input data of Example \ref{calculate2}, and output result by running the designed program in Section 4.

\begin{lstlisting}
	Enter four parts of quaternion matrix B=qadj(a*I_n-A) to be studied:
	B0=[1/2+sqrt(13)/2,0;0,-1/2+sqrt(13)/2];B1=[0,1;-1,0];
	B2=[0,-1;1,0];B3=[0,1;-1,0];
	The value of v11=0.4614i-0.4614j+0.4614k;
	The value of v12=0.6011.	
\end{lstlisting}
\section{Algorithms and programs}
In this section, we introduce a simple method to calculate the right
eigenvalues and the elements of eigenvectors of Hermitian matrices, which is ultimately implemented by MATLAB programs.\\
For any $n\times n$ quaternion Hermitian matrix $A$, it can be uniquely expressed as $A=A_0+A_1\mathbf{i}+A_2\mathbf{j}+A_3\mathbf{k}$, where $A_0, A_1, A_2, A_3\in \mathbb{R}^{n \times n}$.
\red{In order to find the right eigenvalues of $A$, we introduce a mapping $\Phi$: $\mathbb{Q}^{n \times n}\to \mathbb{R}^{4n \times 4n}$ (see \cite{Q-X}),} $$\Phi(A)=\begin{pmatrix} A_0& A_1& A_2 &A_3\\
	-A_1& A_0&-A_3&A_2\\
	-A_2&A_3&A_0&-A_1\\
	-A_3&-A_2&A_1&A_0\end{pmatrix}.$$
Obviously, $\Phi$ is commutative with matrix multiplication and $\Phi$ is a linear mapping, i.e., for any $m\in \mathbb{Q}$ and $A,B\in \mathbb{Q}^{n \times n}$, we have\begin{enumerate}[(i)]
	\item $\Phi(mA)=m\Phi(A)$;
	\item $\Phi(A+B)=\Phi(A)+\Phi(B)$;
	\item $\Phi(AB)=\Phi(A)\Phi(B)$.
\end{enumerate}

\begin{theorem}\label{eigenvalue calculation}
	The right eigenvalues of Hermitian matrix $A$ are the same as the eigenvalues of $\Phi(A)$, and the multiplicity of each eigenvalue of $\Phi(A)$ is $4$ times of $A$.
\end{theorem}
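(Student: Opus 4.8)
The plan is to push the spectral decomposition of a quaternion Hermitian matrix (Lemma~\ref{right-diag}) through the real representation $\Phi$, using only that $\Phi$ is a unital $\mathbb{R}$-algebra homomorphism (properties (i)--(iii) above) together with one extra compatibility with the conjugate transpose. First I would record that for every $A\in\mathbb{Q}^{n\times n}$ one has $\Phi(A^*)=\Phi(A)^{T}$: writing $A=A_0+A_1\mathbf{i}+A_2\mathbf{j}+A_3\mathbf{k}$ one has $A^*=A_0^{T}-A_1^{T}\mathbf{i}-A_2^{T}\mathbf{j}-A_3^{T}\mathbf{k}$, and substituting this into the definition of $\Phi$ reproduces exactly the block transpose of $\Phi(A)$. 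Two consequences follow at once and are all we need: if $A=A^*$ then $\Phi(A)$ is a real \emph{symmetric} $4n\times 4n$ matrix, hence orthogonally diagonalizable with $4n$ real eigenvalues (consistent with Lemma~\ref{right-eig}); and if $V\in\mathbb{Q}^{n\times n}$ is unitary then $\Phi(V)^{T}\Phi(V)=\Phi(V^*)\Phi(V)=\Phi(V^*V)=\Phi(E_n)=E_{4n}$, so $\Phi(V)$ is a real orthogonal matrix (here $\Phi(E_n)=E_{4n}$ is immediate from the definition).

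For the main step I would invoke Lemma~\ref{right-diag}: there is a unitary $V$ with $V^*AV=D$, where $D=\mathrm{diag}(\lambda_1,\dots,\lambda_n)$ and $\lambda_1,\dots,\lambda_n$ are the right eigenvalues of $A$, all real by Lemma~\ref{right-eig}. Applying $\Phi$ and using multiplicativity gives
\[
\Phi(V)^{T}\,\Phi(A)\,\Phi(V)=\Phi(V^*AV)=\Phi(D),
\]
so $\Phi(A)$ is orthogonally similar to $\Phi(D)$. Since $D$ is a real diagonal matrix, its $\mathbf{i},\mathbf{j},\mathbf{k}$ components vanish, and the definition of $\Phi$ yields $\Phi(D)=\mathrm{diag}(D,D,D,D)$, a $4n\times 4n$ diagonal matrix along whose diagonal each $\lambda_i$ appears exactly four times. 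Orthogonal similarity preserves the spectrum together with (algebraic, hence also geometric) multiplicities, so the eigenvalues of $\Phi(A)$ are precisely $\lambda_1,\dots,\lambda_n$, and if a real number $\mu$ occurs with multiplicity $m$ among the right eigenvalues of $A$ then it occurs with multiplicity $4m$ as an eigenvalue of $\Phi(A)$. This is the assertion.

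I expect the only delicate point to be the identity $\Phi(A^*)=\Phi(A)^{T}$: one must check that the particular sign pattern in the definition of $\Phi$ is the one under which quaternionic conjugation corresponds to matrix transposition, so that Hermitian matrices map to symmetric ones and unitary matrices to orthogonal ones; once this block-level verification is in hand, the rest is merely transporting the diagonalization of Lemma~\ref{right-diag} across the homomorphism $\Phi$. It is worth remarking that injectivity of $\Phi$ is never used --- only its $\mathbb{R}$-linearity, multiplicativity, unitality, and the $*$-compatibility. An alternative, more hands-on route is to send a right eigenvector $v$ of $A$ for the (real) eigenvalue $\lambda$ to the $4n\times 4$ real matrix $\Phi(v)$, whose four columns are eigenvectors of $\Phi(A)$ for $\lambda$ and are $\mathbb{R}$-linearly independent because $\mathbb{Q}$ is a division ring; a dimension count via Lemma~\ref{right-diag} then recovers the multiplicity statement, but the similarity argument above is shorter.
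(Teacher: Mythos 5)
Your argument is correct and essentially matches the paper's: both proofs push the diagonalization of Lemma~\ref{right-diag} through the homomorphism $\Phi$ and use that $\Phi$ applied to the real diagonal matrix $D=\mathrm{diag}(\lambda_1,\dots,\lambda_n)$ produces four diagonal copies of $D$, so each right eigenvalue appears with multiplicity multiplied by $4$. The only difference is in the last step and is cosmetic: you conclude via (orthogonal) similarity of $\Phi(A)$ to $\mathrm{diag}(D,D,D,D)$, after checking $\Phi(A^*)=\Phi(A)^{T}$ --- which does hold for this sign pattern, though it is not actually needed, since similarity by the invertible matrix $\Phi(V)$ already suffices --- whereas the paper reaches the same conclusion by computing the characteristic polynomial $\det(\lambda E-\Phi(A))=\prod_{k=1}^{n}(\lambda-\lambda_k)^{4}$ using the multiplicativity of $\Phi$ and of the real determinant.
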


\begin{proof}
	There is an invertible matrix U, such that $U^{-1}(\lambda E-A)U$ is a diagonal matrix for any $\lambda$. By Theorem \ref{right-diag}, we have
	$$\det(\lambda E-A)=\det(U^{-1}(\lambda E-A)U).$$
	Then the characteristic polynomial of $\Phi(A)$ is
	\[\begin{split}
		\det(\lambda E-\Phi(A))&=\det\Phi (\lambda E-A)
		\\&=\det\Phi(U\mathrm{diag}(\lambda-\lambda_1,\lambda-\lambda_2,\cdots,\lambda-\lambda_n)U^{-1})
		\\&=\det(\Phi (U)\Phi(\mathrm{diag}(\lambda-\lambda_1,\lambda-\lambda_2,\cdots,\lambda-\lambda_n))\Phi(U^{-1}))
		\\&=\det\Phi(U^{-1}U)\det\Phi(\mathrm{diag}(\lambda-\lambda_1,\lambda-\lambda_2,\cdots,\lambda-\lambda_n))
		\\&=(\lambda-\lambda_1)^4(\lambda-\lambda_2)^4\cdots(\lambda-\lambda_n)^4.
	\end{split}\]
	Therefore, the eigenvalues of $A$ and $\Phi(A)$ are the same, and the multiplicity of eigenvalues in $\Phi(A)$ is $4$ times that of $A$.
\end{proof}
Thus, the problem of calculating the right eigenvalues of quaternion matrix $A$ is transformed into the problem of calculating the eigenvalues of real matrix $\Phi(A)$. Now we give a program to calculate the modules of the elements of eigenvalues and eigenvectors when $A$ has no multiple eigenvalues. Further, by inputting matrix $\mathrm{qadj}(\lambda I_n-A)$, we can accurately calculate the eigenvectors. For convenience, we label the eigenvalues of $A$ from small to large: $\lambda_1<\lambda_2<\cdots<\lambda_n.$\\
MATLAB code:
\begin{lstlisting}
	%Please input four parts of quaternion matrix A to be studied:A0,A1,
	%A2,A3.
	%Please input the label of the eigenvalue to be calculated:k.
	[n,m]=size(A0);
	mo=zeros(1,n);
	A=[A0,A1,A2,A3;-A1,A0,-A3,A2;-A2,A3,A0,-A1;-A3,-A2,A1,A0];
	[x,y]=eig(A);
	z=sort(sum(y));
	a=zeros(1,n);
	for i=1:n
	a(i)=z(4*i-3);
	end
	%a is the right eigenvalue vector of matrix A.
	M0=zeros(n-1,n-1);
	for p=1:n
	M0(1:p-1,1:p-1)=A0(1:p-1,1:p-1);
	M0(1:p-1,p:n-1)=A0(1:p-1,p+1:n);
	M0(p:n-1,p:n-1)=A0(p+1:n,p+1:n);
	M0(p:n-1,1:p-1)=A0(p+1:n,1:p-1);
	M1=zeros(n-1,n-1);
	M1(1:p-1,1:p-1)=A1(1:p-1,1:p-1);
	M1(1:p-1,p:n-1)=A1(1:p-1,p+1:n);
	M1(p:n-1,p:n-1)=A1(p+1:n,p+1:n);
	M1(p:n-1,1:p-1)=A1(p+1:n,1:p-1);
	M2=zeros(n-1,n-1);
	M2(1:p-1,1:p-1)=A2(1:p-1,1:p-1);
	M2(1:p-1,p:n-1)=A2(1:p-1,p+1:n);
	M2(p:n-1,p:n-1)=A2(p+1:n,p+1:n);
	M2(p:n-1,1:p-1)=A2(p+1:n,1:p-1);
	M3=zeros(n-1,n-1);
	M3(1:p-1,1:p-1)=A3(1:p-1,1:p-1);
	M3(1:p-1,p:n-1)=A3(1:p-1,p+1:n);
	M3(p:n-1,p:n-1)=A3(p+1:n,p+1:n);
	M3(p:n-1,1:p-1)=A3(p+1:n,1:p-1);
	M=[M0,M1,M2,M3;-M1,M0,-M3,M2;-M2,M3,M0,-M1;-M3,-M2,M1,M0];
	[x1,y1]=eig(M);
	z1=sort(sum(y1));
	a1=zeros(1,n-1);
	for i=1:n-1
	a1(i)=z1(4*i-3);
	end
	a2=zeros(1:n-1);
	a2(1:k-1)=a(1:k-1);
	a2(k:n-1)=a(k+1:n);
	mo(p)=sqrt(prod(a(k)-a1)/prod(a(k)-a2));
	end
	%mo(p) is the module of the p-th element of the eigenvector
	%corresponding to the k-th right eigenvalue.
	%Further,we can calculate the eigenvector passes through qadj(a*I_n-A)
	%Please input four parts of quaternion matrix B=qadj(a*I_n-A) to be
	%studied:B0,B1,B2,B3.
	for i=1:n
	if mo(i)~=0
	m=i;
	end
	end
	v=zeros(4,n);
	v(1,m)=sqrt(B0(m,m)/prod(a(k)-a2));
	for i=1:m-1
	v(1,i)=1/v(1,m)*B0(m,i)/prod(a(k)-a2);
	v(2,i)=-1/v(1,m)*B1(m,i)/prod(a(k)-a2);
	v(3,i)=-1/v(1,m)*B2(m,i)/prod(a(k)-a2);
	v(4,i)=-1/v(1,m)*B3(m,i)/prod(a(k)-a2);
	end
	%Columns 1,2,3 and 4 in row x of matrix V represent the real part,i
	%part,j part and k part of the x-th element of the eigenvector
	%respectively.
	
\end{lstlisting}

\section{Discussion and open problem}

In this paper, we provide the eigenvector-eigenvalue identity for the right eigenvalues. But it is known that the left eigenvalues are quite different from the right ones. {And it is more difficult for us to study the left eigenvalues for quaternion matrix (see \cite{left-eig2,left-eig3})}. The eigenvector-eigenvalue identity for the left eigenvalues remains open. 

\section*{Data Availability Statement}

My manuscript has no associated data.

\section*{Contributions}
We declare that all the authors have same contributions to this paper. All the authors approve the final version.

\section*{Conflict of Interest}
\hskip\parindent
The authors declare that they have no conflict of interest.


\begin{thebibliography}{999}
	\addtolength{\itemsep}{-0.55ex}
	
	\bibitem{D-P-T-Z}
	P. B. Denton, S. J. Parke, T. Tao,   X. Zhang. {\em Eigenvectors
		from Eigenvalues}, arXiv:1908.03795, Aug 2019.
	
	
	\bibitem{tao-eig}
	P. B. Denton, S. J. Parke, T. Tao,   X. Zhang, Eigenvectors from eigenvalues: a survey of a basic identity in linear algebra. {\em Bull. Amer. Math. Soc.}, {\bf59} (2021), 31-58.
	
	\bibitem{toshey}
	S. Toshev, On T violation in matter neutrino oscillations, {\em Mod. Phys. Lett.}, A6 (1991), 455-460.
	
	\bibitem{tao}
	T. Tao and V. Vu, Random matrices: Universality of local eigenvalue statistics. {\em Acta Math.}, 206:1 (2011), 127-204.	
	
	\bibitem{D-P-X}
	P. B. Denton, S. J. Parke,   X. Zhang, Neutrino oscillations in matter via eigenvalues, {\em Phys. Rev. D}, {\bf101} (2020).
	
	
	
	\bibitem{peter}
	P. B. Denton, S. J.  Parke,   X. Zhang.{\em Eigenvalues: the Rosetta Stone for Neutrino Oscillations in Matter}. arXiv:1970.02534, 2019.
	
	
	\bibitem{PARLETT}
	B. N. Parlett, {\em The symmetric eigenvalue problem}, Prentice-Hall, Inc., Englewood
	Cliffs, N.J., 1980. Prentice-Hall Series in Computational Mathematics.
	
	\bibitem{cramer}
	G. Cramer, {\em Introduction \`{a} l’analyse des lignes courbes alg\'ebriques}, Chez les fr\`eres
	Cramer et C. Philibert, Geneva, 1750, 657-659.
	
	\bibitem{birkhoff}
	G. Birkhoff and S. MacLane, {\em A Survey of Modern Algebra}, Macmillan Company, New York, 1941.
	
	\bibitem{M-D}
	A. K. Mukherjee and K. K. Datta, Two new graph-theoretical methods for generation
	of eigenvectors of chemical graphs, {\em P. Indian AS-Chem. Sci.}, 101: 6 (1989), 499-517.
	
	
	
	
	\bibitem{G-S}
	B. Gaveau and L. S. Schulman, Limited quantum decay, {\em J. Phys. A}, {\bf28} (1995), no. 24,
	7359-7374.
	
	\bibitem{x.chen}
	X. Chen, {\em Note on eigenvectors from eigenvalues}, arXiv:1911.
	09081, 2019.
	
	\bibitem{stawiska}
	M. Stawiska, {\em More on the eigenvectors-from-eigenvalues identity}, arxiv:1912.06967, 2019.
	
	\bibitem{Mieghem1}
	P. Van Mieghem, {\em Graph eigenvectors, fundamental weights and centrality metrics
		for nodes in networks}, arXiv:1401.4580, 2014.
	
	
	\bibitem{esy}
	L. Erd\H{o}s, B. Schlein, and H.-T. Yau, Semicircle law on short scales and delocalization
	of eigenvectors for Wigner random matrices, {\em Ann. Probab.}, {\bf37} (2009), 815-852.
	
	
	
	\bibitem{jacobi}
	C. G. J. Jacobi, De binis quibuslibet functionibus homogeneis secundi ordinis per substitutiones lineares in alias binas tranformandis, quae solis quadratis variabilium constant; una cum variis theorematis de tranformatione etdeterminatione integralium multiplicium (Latin), {\em J. Reine Angew. Math.}, {\bf12} (1834), 1-69, DOI
	10.1515/crll.1834.12.1.
	
	\bibitem{LOWNER}
	K. L\"owner, \"Uber monotone Matrixfunktionen (German), {\em Math. Z.}, {\bf38} (1934), no. 1,
	177-216, DOI 10.1007/BF01170633.
	
	\bibitem{G-E}
	M. Gu and S. C. Eisenstat, A stable and efficient algorithm for the rank-one modification of the symmetric eigenproblem, {\em SIAM J. Matrix Anal. Appl.}, {\bf15} (1994),
	no. 4, 1266-1276.
	
	\bibitem{demmel}
	J. W. Demmel, {\em Applied numerical linear algebra}, SIAM, Philadelphia, PA, 1997.
	
	\bibitem{weinberger}
	H. F. Weinberger, Error bounds in the Rayleigh–Ritz approximation of eigenvectors,
	{\em J. Res. Nat. Bur. Standards Sect. B}, {\bf64B} (1960), 217-225.
	
	\bibitem{thompson}
	R. C. Thompson, Principal submatrices of normal and Hermitian matrices, {\em Illinois
		J. Math}, {\bf10} (1966), 296-308.
	
	\bibitem{THOMPSON3}
	R. C. Thompson and P. McEnteggert, Principal submatrices. II. The upper
	and lower quadratic inequalities, {\em Linear Algebra Appl.}, {\bf1} (1968), 211-243, DOI
	10.1016/0024-3795(68)90005-0.
	
	\bibitem{thompson2}
	R. C. Thompson, Principal submatrices. IV. On the independence of the eigenvalues
	of different principal submatrices, {\em Linear Algebra Appl.}, {\bf2} (1969), 355-374.
	
	\bibitem{D-H}
	E. Deutsch and H. Hochstadt, On Cauchy’s inequalities for Hermitian matrices,
	{\em Amer. Math. Monthly}, {\bf85} (1978), no. 6, 486-487, DOI 10.2307/2320075.
	
	\bibitem{silov}
	G. E. $\check{S}$ilov, {\em Matematicheski$\breve{i}$ analiz: Vtoro$\breve{i}$ spetsial$^{\prime}$ny$\breve{i}$ kurs} (Russian), Izdat. ``Nauka", Moscow, 1965.
	
	\bibitem{PAIGE}
	C. C. Paige, {\em The computation of eigenvalues and eigenvectors of very large sparse
		matrices}, PhD thesis, London University Institute of Computer Science, 1971.
	
	
	
	\bibitem{g-l}
	G. H. Golub and C. F. Van Loan, {\em Matrix computations}, Johns Hopkins Series in
	the Mathematical Sciences, vol. 3, Johns Hopkins University Press, Baltimore, MD,
	1983.
	
	\bibitem{Golub}
	G. H. Golub, Some modified matrix eigenvalue problems, {\em SIAM Rev.}, {\bf15} (1973),
	318-334.
	
	\bibitem{Gladwell}
	G. M. L. Gladwell, {\em Inverse problems in vibration}, 2nd ed., Solid Mechanics and its
	Applications, vol. 119, Kluwer Academic Publishers, Dordrecht, 2004.
	
	\bibitem{boley}
	D. Boley and G. H. Golub, Inverse eigenvalue problems for band matrices, {\em Numer.
		Anal.} (Proc. 7th Biennial Conf., Univ. Dundee, Dundee, 1977), Springer, Berlin,
	{\bf630} (1978), 23-31.
	
	\bibitem{G-K-V}
	S\'ebastien Galais, James Kneller, and Cristina Volpe, The neutrino-neutrino interaction effects in supernovae: the point of view from the matter basis, {\em J. Phys.},
	G39:035201, 2012.
	
	\bibitem{knyazey}
	A. V. Knyazev, {\em Computation of eigenvalues and eigenvectors for mesh problems: algorithms and error estimates} (in Russian), PhD thesis, Department of Numerical
	Mathematics, USSR Academy of Sciences, Moscow, 1986.
	
	\bibitem{k-s}
	A. V. Knyazev and A. L. Skorokhodov, On exact estimates of the convergence rate
	of the steepest ascent method in the symmetric eigenvalue problem, {\em Linear Algebra
		Appl.}, {\bf154/156} (1991), 245-257.
	
	\bibitem{XU}
	S. Xu, {\em Theories and Methods of Matrix Calculations}, Peking University
	Press, Beijing, 1995.
	
	\bibitem{chu}
	M. T. Chu and G. H. Golub, Structured inverse eigenvalue problems, {\em Acta Numer.},
	{\bf11} (2002), 1-71.
	
	\bibitem{D-N-T}
	P. Deift, L. C. Li, T. Nanda, and C. Tomei, The Toda flow on a generic orbit is integrable, {\em Comm. Pure Appl. Math.}, {\bf39} (1986), no. 2, 183-232.
	
	
	
	\bibitem{l-f}
	Q. Li and K. Q. Feng, On the largest eigenvalue of a graph (Chinese), {\em Acta Math. Appl. Sinica},
	{\bf2} (1979), no. 2, 167-175.
	
	\bibitem{HAGOS}
	E. M. Hagos, Some results on graph spectra, {\em Linear Algebra Appl.}, {\bf356} (2002), 103-111.
	
	\bibitem{cvetkovi}
	D. Cvetkovi\'c, P. Rowlinson, and S. K. Simi\'c, Star complements and exceptional graphs, {\em Linear Algebra Appl.}, {\bf423} (2007), no. 1, 146-154.
	
	
	\bibitem{Godsil1}
	C. D. Godsil, {\em Algebraic combinatorics}, Chapman and Hall Mathematics Series, Chapman \& Hall, New York, 1993.
	
	\bibitem{G-M}
	C. D. Godsil and B. D. McKay, Spectral conditions for the reconstructibility of a
	graph, {\em J. Combin. Theory Ser. B}, {\bf30} (1981), no. 3, 285-289.
	
	
	
	\bibitem{Godsil2}
	C. Godsil,  When can perfect state transfer occur?, {\em Electron. J. Linear Algebra}, {\bf23}
	(2012), 877-890.
	
	\bibitem{GKMG}
	C. Godsil, K. Guo, M. Kempton, and G. Lippner, State transfer in
	strongly regular graphs with an edge perturbation, J. Comb Theory A, https://www.sciencedirect.com/science/article/abs/pii/
	S0097316519301621, 2017.
	
	
	\bibitem{N-T-U}
	P. Nylen, T. Y. Tam, and F. Uhlig, On the eigenvalues of principal submatrices
	of normal, Hermitian and symmetric matrices, {\em Linear Multilinear A.}, {\bf36}
	(1993), no. 1, 69-78.
	
	\bibitem{bebiano}
	N. Bebiano, S. Furtado, and J. da Providˆencia, On the eigenvalues of principal
	submatrices of J-normal matrices, {\em Linear Algebra Appl.}, {\bf435} (2011), no. 12, 3101-3114.
	
	\bibitem{Yu. Baryshnikov}
	Yu. Baryshnikov, {\em GUEs and queues}, Probab. Theory Related Fields 119 (2001),
	no. 2, 256-274.
	
	\bibitem{Forrester}
	P. J. Forrester and J. Zhang, {\em Co-rank 1 projections and the randomised Horn problem}, arXiv:1905.05314, May 2019.
	
	\bibitem{Dumitriu}
	I. Dumitriu and A. Edelman, Matrix models for beta ensembles, {\em J. Math. Phys.}, {\bf43}
	(2002), no. 11, 5830-5847.
	
	\bibitem{Mieghem}
	P. Van Mieghem, {\em Graph spectra for complex networks}, Cambridge University Press,
	Cambridge, 2011.
	
	\bibitem{kAUSEL}
	E. Kausel, Normalized modes at selected points without normalization, {\em J. Sound Vib.}, 420:261-268, April 2018.
	
	\bibitem{wolchover}
	N. Wolchover, {\em Neutrinos lead to unexpected discovery in basic math}, Quanta Magazine, Nov 2019.
	
	\bibitem{Xia-SPAM}
	\newblock K. Kou and Y.Xia,
	\newblock  Linear quaternion differential equations: Basic
	theory and fundamental results,
	\newblock \emph{Stud. Appl. Math.}, {\bf 141} (2018), 3--45.
	
	\bibitem{XIA-JMP}
	\newblock K. Kou, W. Liu and Y. Xia,
	\newblock Solve the linear quaternion-valued
	differential equations having multiple eigenvalues,
	\newblock \emph{J. Math. Phys.}, {\bf 60} (2019), 023510.		
	
	
	
	\bibitem{Xia-Book}
	\newblock Y. Xia,  K. Kou, and Y. Liu,
	\newblock  {\em Theory and Applications of Quaternion-Valued
		Differential Equations},
	\newblock Science Press, Beijing, 2021.
	
	\bibitem{WangC2}  Z. Li, C. Wang, R.P. Agarwal, D. O'Regan, Commutativity of quaternion-matrix-valued functions and
	quaternion matrix dynamic equations on time scales, {\em Stud. Appl. Math.}, {\bf146} (2021), 139-210.
	
	\bibitem{WangJR1} L. Suo, M. Fe$\check{c}$kan, J. Wang, Quaternion-valued linear impulsive differential equations, {\em Qual. Theor. Dyn. Syst.}, {\bf20} (2021), Art.33.
	
	\bibitem{WangJR2} D. Chen, M. Fe$\check{c}$ckan, J. Wang, On the stability of linear quaternion-valued differential equations,
	{\em Qual. Theor. Dyn. Syst.}, {\bf 21} (2022), Art.9.
	
	\bibitem{WangJR3} L. Suo, M. Fe$\check{c}$kan, J. Wang, Existence of periodic solutions to quaternion-valued  impulsive differential equations, {\em Qual. Theor. Dyn. Syst.}, preprint.
	
	\bibitem{AH}
	A. K. Donachali, H. Jafari, A decomposition method for solving quaternion differential equations, {\em Int. J. Appl. Comput. Math.}, {\bf 7} (2020),  107-112.
	
	\bibitem{CZRP}
	C. Wang, Z. Li, R. P. Agarwal, A new quaternion hyper-complex space with hyper	argument and basic functions via quaternion dynamic
	equations, {\em J. Geom. Anal.}, {\bf 32} (2022), 1-83.
	
	\bibitem{F-Y}
	F. Zhang, Y. Wei, Jordan canonical from of a partitioned complex matrix and its application to real
	quaternion matrices, {\em Commun. Algebr.}, {\bf 29} (2001), 2363-2375.	
	
	\bibitem{Baker} A. Baker, Right eigenvalues for quaternionic matrices: A topological approach, {\em Linear
		Algebra Appl.}, {\bf 286} (1999), pp. 303-309.
	
	\bibitem{compute}
	Y. Li, M. Wei, F. Zhang, J. Zhao, On the power method for quaternion right eigenvalue problem, {\em J. Comput. Appl. Math.}, {\bf 345} (2019), 59-69.
	
	\bibitem{Br} A. Brauer, Limits for the characteristic roots of matrices II, {\em Duke Math. J.}, {\bf 14} (1947),
	pp. 21-26.
	
	
	\bibitem{ZhangFZ} F. Farid, Q. Wang, F. Zhang, On the eigenvalues of quaternion matrices, {\em Linear  Multilinear A.}, {\bf 59} (2011), 451-473.
	
	
	
	
	
	
	\bibitem{chen1}
	L. Chen, Definition of determinant and cramer solutions over the quaternion field, {\em Acta Math. Sin.}, {\bf 7} (1991), 171-180.
	
	
	
	
	
	\bibitem{Bren} J. L. Brenner, Matrices of quaternions, {\em Pac. J. Math.}, {\bf 1} (1951) 329-335.
	
	
	
	
	\bibitem{row}
	B. Xie, Expansion theorem of determinant of self-conjugate quaternion matrix and its application, {\em Acta Math. Sin.}, {\bf 5} (1980), 668-683.
	
	\bibitem{qadj}
	H. L$\ddot{u}$, Z. Yang, The row expansion and determinant of quaternion matrix, {\em J. Beihua Univ.}, {\bf 2} (2001), 104-111.
	
	\bibitem{Q-X}
	Q. Wang, X. Wang, Arnoldi method for large quaternion right eigenvalue problem, {\em J. Sci. Comput.}, {\bf82} (2020).
	
	
	\bibitem{left-eig2}
	L. Huang, W. So, On left eigenvalues of a quaternionic matrix, {\em Linear Algebra Appl.}, {\bf 323} (2001), 105-116.
	
	\bibitem{left-eig3}
	E. Mac\'{i}as-Virg\'{o}s, M. J. Pereira-S\'{a}ez,  Ana D. Tarr\'{i}o-Tobar, {\em Rayleigh quotient and left eigenvalues of
		quaternionic matrices}, arXiv:2012 0362, 2020.
	
	
	
	
	
	
	
	
	
	
	
	
	
	
	
	
	
	
	
	
	
	
	
	
	
	
	
	
	
	
	
	
	
\end{thebibliography}
\end{document}